\newtheorem*{assumption*}{Assumption}
\newtheorem{assumption}{Assumption}
\newtheorem{lemma}{Lemma}
\newtheorem{remark}{Remark}
\newtheorem{theorem}{Theorem}
\begin{document}

\begin{frontmatter}
	
	\title{Supercloseness of finite element method for a singularly perturbed convection-diffusion problem on Bakhvalov-type mesh in 2D \tnoteref{funding}}
	\tnotetext[funding]{The current research  was partly supported by NSFC (11771257), Shandong Provincial NSF (ZR2021MA004).}
	\author[label1] {Chunxiao Zhang\fnref{cor1}}
	\author[label1] {Jin Zhang\corref{cor2}}
	\fntext[cor1] {Email address:~chunxiaozhangang@outlook.com }
	\cortext[cor2] {Email address:~jinzhangalex@sdnu.edu.cn }
	\address[label1]{School of Mathematics and Statistics, Shandong Normal University,
		Jinan 250014, China}
	


	\begin{abstract}
For singularly perturbed convection-diffusion problems, supercloseness analysis of finite element method is still open on Bakhvalov-type meshes, especially in the case of 2D. The difficulties arise from the width of the mesh in the layer adjacent to the transition point, resulting in a suboptimal estimate for convergence. Existing analysis techniques cannot handle these difficulties well. To fill this gap, a novel interpolation is designed delicately for the first time for the smooth part of the solution, bringing about the optimal supercloseness result of almost order 2 under an energy norm for finite element method. Our theoretical result is uniformly in the singular perturbation parameter $\varepsilon$ and is supported by the numerical experiments.

	\end{abstract}                                                                                                                                          
	
	\begin{keyword}
		Singularly perturbed \sep Convection–diffusion \sep Bakhvalov-type mesh \sep Finite element method \sep Supercloseness.
	\MSC[2000] 65N12 \sep 65N30	
	\end{keyword}
	
\end{frontmatter}


\section{Introduction}
The following singularly perturbed convection-diffusion equation is taken into consideration:
\begin{equation}\label{model problem}
	\begin{aligned}
		-\varepsilon\Delta u-bu_{x}+cu=&f\quad&&\text{in $\Omega=(0,1)^{2}$},\\
		u=&0\quad &&\text{on $\partial\Omega$},
	\end{aligned}
\end{equation}
where $0<\varepsilon\ll1$ represents the perturbation parameter, and $b$, $c$, and $f$ are bivariate functions with sufficient smoothness. The following assumptions also hold:
$$b\geq\beta>0,\quad c+\frac{1}{2}b_{x}\ge\gamma>0\quad \text{on $\overline{\Omega}$},$$
where $\beta$, $\gamma$ are constants. According to the above assumptions, solution to problem \eqref{model problem} exists and is unique in $H_{0}^{1}(\Omega)\cap H^{2}(\Omega)$ for each $f\in L^{2}(\Omega)$ (see \cite{Roo1Sty2:2008-Robust}). Besides, this solution has two parabolic layers at $y=0$, $y=1$ of width $\mathcal{O}(\sqrt\varepsilon\ln\frac{1}{\varepsilon})$ and an exponential layer at $x=0$ of width $\mathcal{O}(\varepsilon\ln\frac{1}{\varepsilon})$. Problems with characteristic layers like \eqref{model problem} are crucial. They can serve as mathematical models for dealing with practical problems, for example, the flow past a surface with a no-slip border condition. It is well known that the existence of boundary layers makes standard numerical methods unstable. A large number of viewpoints are proposed to deal with layers. Among them, applying finite element method to layer-adapted meshes is an essential subject \cite{Roo1:1998-Layer, Roo1Lin2:1999-Sufficient, Lin1:2003-Layer, Lin1Tor2:2009-Analysis, Gra1Lis2:2010-Coupled}. 

For finite element method (FEM), supercloseness is a crucial property of convergence. Supercloseness here means that the convergence property of $\varPi u-u^N$ is higher than that of $u-u^N$ in a certain norm, where $u$ is the exact solution, $\varPi u$ is a certain interpolation of $u$ and $u^N$ is the finite element solution. Supercloseness property can be employed in various estimates, such as $L^2$ estimates \cite{Sty1Mar2:2003-SDFEM} and $L^\infty$ estimates \cite{Liu1Zhang2:2018-Pointwise}, to evaluate the accuracy of the numerical solutions.

Up to now, there have been numerous studies focusing on supercloseness of FEM on Shishkin-type meshes, one of the layer-adapted meshes \cite{Fra1Lin2:2008-Supercloseness, Zhang1Sty2:2017-Supercloseness, Liu1Sty2:2018-Supercloseness, Li1:2001-Convergence}. Bakhvalov-type meshes are another widely used layer-adapted meshes \cite{linss:2009-layer}. Compared to Shishkin-type meshes, Bakhvalov-type meshes perform numerically better in certain cases, and their transition points are not impacted by the mesh parameter $N$. Nevertheless, little progress has been achieved in the supercloseness analysis on these meshes. The main difficulty arises from the width of the last element mesh in the layer domain, resulting in a suboptimal estimate when applying the standard Lagrange interpolation to Bakhvalov-type meshes (refer to \cite[Question 4.1]{Roo1Sty2:2015-Some} and \cite{Zhang1Liu2:2021-Supercloseness} for more information). To optimize the convergence order on Bakhvalov-type meshes, Zhang and Liu proposed a special interpolation in \cite{Zhang1Liu2:2020-Optimal} for the exponential layer of the solution, which opens up a new direction for the convergence analysis. Subsequently, they created another special interpolation for the smooth part of the solution in \cite{zhang1Liu2:2022-Supercloseness} to improve its convergence accuracy. Combined with the two interpolations, supercloseness of optimal order 2 can be obtained directly in the one-dimensional case. 

As we learned, however, almost all supercloseness analysis on Bakhvalov-type meshes are carried out in 1D \cite{zhang1Liu2:2022-Supercloseness, Zhang1Lv2:2021-Supercloseness}. So far, no article has made supercloseness analysis on Bakhvalov-type meshes in 2D. We find that getting a sharp estimate for the smooth part of the solution faces a lot of challenges in a two-dimensional setting. Existing analysis techniques cannot deal with these difficulties well. Specifically, the interpolation proposed in \cite{zhang1Liu2:2022-Supercloseness} for the smooth part cannot be generalized to a two-dimensional case, indicating that constructing an interpolation that is both continuous and capable of optimizing the estimate of the smooth part is a very challenging task. 

To fill this gap, we construct a novel interpolation according to the characteristics of the smooth part and the structures of Bakhvalov-type meshes in 2D for the first time. The new interpolation can improve the supercloseness accuracy of the smooth part by eliminating estimates of some intractable terms, providing a powerful tool for future supercloseness analysis on Bakhvalov-type meshes in 2D. In addition, we extend another special interpolation in \cite{Zhang1Liu2:2020-Optimal} to a two-dimensional case for the exponential layer of the solution. Note that some boundary corrections are necessary to ensure the interpolation in 2D satisfies homogeneous Dirichlet boundary conditions. Then we can draw the main conclusion of this paper: for problem \eqref{model problem}, the convergence order of supercloseness under an energy norm can attain to almost 2 on a Bakhvalov-type mesh in 2D. Our result is optimal and is uniformly in the singular perturbation parameter $\varepsilon$. 

Then we will introduce the structure of our article. In Section \ref{sec. 2} we state some information about the solution $u$, construct a Bakhvalov-type mesh in 2D, and present the corresponding bilinear FEM. In Section \ref{sec. 3}, the novel interpolation is constructed for our supercloseness arguments. In addition, some preliminary conclusions are derived. We thoroughly demonstrate supercloseness property on the Bakhvalov-type mesh in Section \ref{sec. 4}. Finally, some numerical results are shown in Section \ref{sec. 5} to support our theoretical conclusion.

Throughout this article, the standard Sobolev spaces $W^{m,p}(D)$ will be used, where m are nonnegative integers, domain $D\subset\Omega$ and $H^{m}(D)=W^{m,2}(D)$. On $H^{m}(D)$, $\vert\cdot\vert_{m,D}$ is semi-norm and $\Vert\cdot\Vert_{m,D}$ is norm. In addition, $L^{p}(D)$ is Lebesgue space and $\Vert\cdot\Vert_{L^{p}(D)}$ represents norms in this space. When $p=2$, $\Vert\cdot\Vert_{L^{2}(D)}$ is denoted as $\Vert\cdot\Vert_{D}$. We shall remove $D$ from these notations if $D=\Omega$. Additionally, all constants in this paper, including generic constants $C$ and fixed constants $C_{i}$, are unaffected by $\varepsilon$ and the mesh parameter N.
\section{Regularity results, Bakhvalov-type mesh and FEM}\label{sec. 2}
\subsection{Regularity results}
\begin{assumption}\label{bound}
	We can decompose the solution to \eqref{model problem} into the following parts:
	\begin{equation}\label{decomposition of u}
		u=S+E_{1}+E_{2}+E_{12},\quad \forall(x,y)\in\overline{\Omega},
	\end{equation}
	where $S$ is the smooth part, $E_{1}$ is the exponential layer at $x=0$, $E_{2}$ are the parabolic layers at $y=0$ and $y=1$, $E_{12}$ is the corner layer.
	
	Furthermore, for any $(x,y)\in \bar{\Omega}$, the following inequalities hold:
	\begin{subequations}
		\begin{align}
			\left|\frac{\partial^{m+n}S}{\partial x^{m}\partial y^{n}}(x,y) \right| &\le C,\\
			\left|\frac{\partial^{m+n}E_{1}}{\partial x^{m}\partial y^{n}}(x,y) \right| &\le C\varepsilon^{-m}e^{\frac{-\beta x}{\varepsilon}},\\
			\left|\frac{\partial^{m+n}E_{2}}{\partial x^{m}\partial y^{n}}(x,y) \right| &\le C\varepsilon^{-\frac{n}{2}}(e^{-\frac{y}{\sqrt\varepsilon}}+e^{-\frac{1-y}{\sqrt\varepsilon}}),\\
			\left|\frac{\partial^{m+n}E_{12}}{\partial x^{m}\partial y^{n}}(x,y) \right| &\le C\varepsilon^{-(m+\frac{n}{2})}e^{\frac{-\beta x}{\varepsilon}}(e^{-\frac{y}{\sqrt\varepsilon}}+e^{-\frac{1-y}{\sqrt\varepsilon}}),
		\end{align}
	\end{subequations}
	where $m$, $n$ are nonnegative integers and $0\le m+n\le 3$.
\end{assumption}
More details about Assumption \ref{bound} can be found in \cite{Kel1Sty2:2005-Corner} and \cite{Kel1Sty2:2007-Sharpened}.

\subsection{Bakhvalov-type mesh}
Different Bakhvalov-type meshes are proposed as approximations of Bakhvalov meshes (see \cite{linss:2009-layer}). In this paper, we shall analyze a Bakhvalov-type mesh in 2D introduced in \cite{Roo1:2006-Error}.

Assume that $N\ge4$ is a positive integer and is even, $x_{i}$ ($0\le i\le N$) and $y_{j}$ ($0\le j\le N$) are mesh nodes satisfying  $$0=x_{0}<x_{1}<\cdots<x_{N}=1,\quad0=y_{0}<y_{1}<\cdots<y_{N}=1.$$
Then, the Bakhvalov-type mesh is defined by
\begin{subequations}\label{B-type}
\begin{align}
x_{i}=\phi(\frac{i}{N})=\left\{
\begin{aligned}
	&-\frac{\sigma\varepsilon}{\beta}\ln(1-2(1-\varepsilon)\frac{i}{N})&&i=0,1,...,\frac{N}{2},\\
	&1-(1-x_{\frac{N}{2}})\frac{2(N-i)}{N}&&i=\frac{N}{2}+1,...,N,
\end{aligned}
\right.
\end{align}
	\begin{align}
	y_{j}=\varphi(\frac{j}{N})=\left\{
	\begin{aligned}
		&-\sigma\sqrt\varepsilon\ln(1-4(1-\varepsilon)\frac{j}{N})&&j=0,1,...,\frac{N}{4},\\
		&d_{1}(\frac{j}{N}-\frac{1}{4})+d_{2}(\frac{j}{N}-\frac{3}{4})&&j=\frac{N}{4}+1,...,\frac{3N}{4}-1,\\
		&1+\sigma\sqrt\varepsilon\ln(1-4(1-\varepsilon)(1-\frac{j}{N})),&&j=\frac{3N}{4},...,N,
	\end{aligned}
	\right.
\end{align}
\end{subequations}
where $d_{1}$, $d_{2}$ are used to ensure the continuity of the function $\psi$ at the transition points $\frac{1}{4}$ and $\frac{3}{4}$.

$\Omega$ is split into rectangular meshes by connecting the mesh nodes with lines that are parallel to the $x$-axis and $y$-axis. These meshes are written as $K_{i,j}=[x_i,x_{i+1}]\times[y_j,y_{j+1}]$, and the notation $K$ denotes a generic rectangular mesh. Then we get a tensor-product rectangular mesh $\mathcal{T}_{N}$ with mesh points $(x_i,y_j)$. See Figure \ref{fig:rectangulation}

\begin{figure}[htbp]
	\centering
	\includegraphics[width=0.7\linewidth]{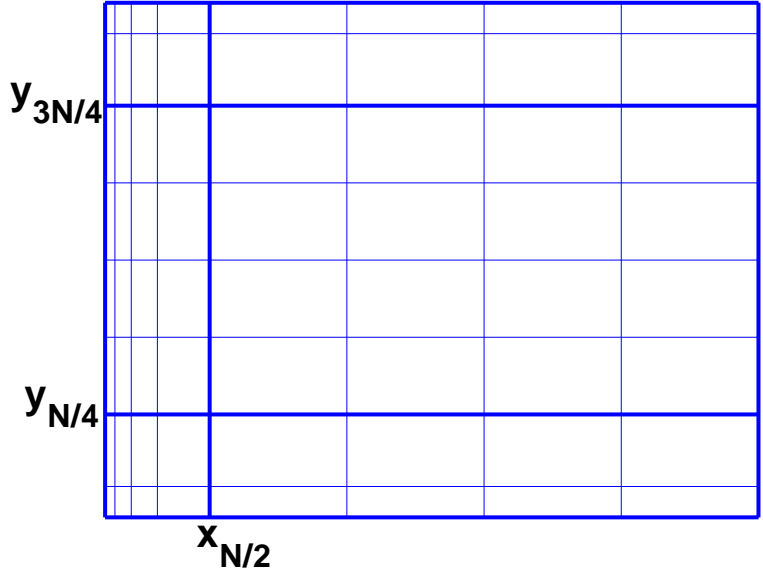}
	\caption{Rectangulation $\mathcal{T}_{N}$}
	\label{fig:rectangulation}
\end{figure}
 
\begin{assumption}\label{transposition}
	Suppose that $\sigma=\frac{5}{2}$ and $\varepsilon\le N^{-1}$ in our analysis. Additionally, we assume that
	$$\frac{\sigma\varepsilon}{\beta}\ln\frac{1}{\varepsilon}\le\frac{1}{2}\quad\text{and}\quad\sigma\sqrt\varepsilon\ln\frac{1}{\varepsilon}\le\frac{1}{4}.$$
\end{assumption}

For convenience, let $h_{x,i}=x_{i+1}-x_{i}$, $h_{y,j}=y_{j+1}-y_{j}$, and when $j=\frac{N}{4},\dots,\frac{3N}{4}-1$, set $h_{y,j}=:h$; when $i=\frac{N}{2},\dots,N-1$, set $h_{x,i}=:H$.
\begin{lemma}\label{mesh step}
	If Assumption \ref{transposition} holds, then for the Bakhvalov-type mesh \eqref{B-type}, we have
	\begin{align*}
		&N^{-1}\le H\le 2N^{-1},\\
		&\frac{1}{2}\sigma\varepsilon\le h_{x,\frac{N}{2}-1}\le 2\sigma N^{-1},\\
		&h_{x,0}\le h_{x,1}\le\cdots\le h_{x,\frac{N}{2}-2}\le \sigma\varepsilon,\\
		&C_1 \varepsilon N^{-1}\le h_{x,0}\le C_2 \varepsilon N^{-1},\\
		&C_3 \sigma\varepsilon\ln N\le x_{\frac{N}{2}-1}\le C_4 \sigma\varepsilon\ln N,\quad x_{\frac{N}{2}}\ge C\sigma\varepsilon|\ln\varepsilon|,\\
		&h_{x,i}^{\mu}e^{\frac{-\beta x_{i}}{\varepsilon}}\le C\varepsilon^{\mu}N^{-\mu},\quad\text{$0\le i\le \frac{N}{2}-2$\quad and\quad$0\le \mu\le \sigma$},\\
		&\vert E_{1}(x_{\frac{N}{2}},y)\vert\le C\varepsilon^{\sigma},\quad\vert E_{1}(x_{\frac{N}{2}-1},y)\vert\le CN^{-\sigma}.
	\end{align*}
	Concerning $h_{y,j}$, $0\le j\le N-1$, except that $\varepsilon$ is replaced by $\sqrt{\varepsilon}$, other properties can be derived similarly.
\end{lemma}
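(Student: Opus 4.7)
\medskip

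\noindent\textbf{Proof proposal for Lemma \ref{mesh step}.}

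The plan is to treat each assertion by a direct computation using the explicit formulas in \eqref{B-type}, since all statements concern the $x$-mesh generated by $\phi$; the $y$-mesh assertions follow by the same arguments after replacing $\varepsilon$ with $\sqrt\varepsilon$. For the coarse region $i\ge N/2$, the mesh is uniform with step $H=(1-x_{N/2})\cdot 2/N$, so the bound $N^{-1}\le H\le 2N^{-1}$ will follow by showing $1/2\le 1-x_{N/2}\le 1$; the upper bound is trivial, and the lower bound reduces to $x_{N/2}=\frac{\sigma\varepsilon}{\beta}|\ln\varepsilon|\le 1/2$, which is exactly the first inequality in Assumption \ref{transposition}. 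The same identity gives $x_{N/2}\ge C\sigma\varepsilon|\ln\varepsilon|$ at once.

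For the fine region $0\le i\le N/2-1$, I would set $t_i:=1-2(1-\varepsilon)i/N$ so that $e^{-\beta x_i/\varepsilon}=t_i^{\sigma}$ and $h_{x,i}=\frac{\sigma\varepsilon}{\beta}\ln(t_i/t_{i+1})$. Monotonicity $h_{x,0}\le\dots\le h_{x,N/2-2}$ then follows from the convexity of $-\ln$ (equivalently, $\phi''>0$ on $[0,1/2)$). The upper bound $h_{x,N/2-2}\le\sigma\varepsilon$ and the two-sided bound on $h_{x,0}$ both come out of $\ln(1+z)\le z$ together with $\ln(1+z)\ge z/(1+z)$ applied to $t_i/t_{i+1}=1+\frac{2(1-\varepsilon)/N}{t_{i+1}}$, noting that $t_{i+1}\ge 2/N$ in this range and using $\varepsilon\le N^{-1}$. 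For $h_{x,N/2-1}=x_{N/2}-x_{N/2-1}$, I would combine the exact value of $x_{N/2}$ with the representation of $x_{N/2-1}$; the lower bound $\frac12\sigma\varepsilon$ falls out of evaluating $-\frac{\sigma\varepsilon}{\beta}\ln(2/N+\varepsilon(N-2)/N)$, while the upper bound $2\sigma N^{-1}$ uses the upper bound on $x_{N/2}$ and the lower bound on $x_{N/2-1}$. The estimate $C_3\sigma\varepsilon\ln N\le x_{N/2-1}\le C_4\sigma\varepsilon\ln N$ is an immediate consequence of $2/N\le t_{N/2-1}\le 3/N$, obtained again from $\varepsilon\le N^{-1}$.

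The main technical obstacle is the product bound
\[
h_{x,i}^{\mu}e^{-\beta x_i/\varepsilon}\le C\varepsilon^{\mu}N^{-\mu}\qquad 0\le i\le N/2-2,\ 0\le \mu\le \sigma,
\]
because one cannot afford to bound $h_{x,i}$ by $\sigma\varepsilon$ (which only yields $\varepsilon^{\mu}N^{-\sigma}$ at $i=N/2-2$, adequate only when $\mu=\sigma$). The approach I would take is to keep $h_{x,i}$ in its integral form: using $\ln(t_i/t_{i+1})\le \frac{t_i-t_{i+1}}{t_{i+1}}=\frac{2(1-\varepsilon)/N}{t_{i+1}}$, one obtains $h_{x,i}\le \frac{2\sigma\varepsilon}{\beta N t_{i+1}}$, so that
\[
h_{x,i}^{\mu}e^{-\beta x_i/\varepsilon}\le \Bigl(\frac{2\sigma\varepsilon}{\beta N}\Bigr)^{\mu}\frac{t_i^{\sigma}}{t_{i+1}^{\mu}}.
\]
Since $t_{i+1}\ge 2/N$ for $i\le N/2-2$, the ratio $t_i/t_{i+1}=1+\frac{2(1-\varepsilon)/N}{t_{i+1}}\le 2$, which gives $t_i^{\sigma}\le 2^{\sigma}t_{i+1}^{\sigma}$ and thus $t_i^{\sigma}/t_{i+1}^{\mu}\le 2^{\sigma}t_{i+1}^{\sigma-\mu}\le 2^{\sigma}$ whenever $\mu\le \sigma$, closing the estimate.

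Finally, the two pointwise bounds on $E_1$ follow by inserting the $x$-values just computed into the exponential decay estimate of Assumption \ref{bound}: $|E_1(x_{N/2},y)|\le Ce^{-\beta x_{N/2}/\varepsilon}=C\varepsilon^{\sigma}$, and $|E_1(x_{N/2-1},y)|\le Ct_{N/2-1}^{\sigma}\le C(3/N)^{\sigma}\le CN^{-\sigma}$. The transfer to the $y$-direction is mechanical: the piecewise-linear middle part of $\varphi$ plays the role of the uniform part of $\phi$, and the outer pieces are handled by the same $t_j$-substitution with $\varepsilon$ replaced by $\sqrt\varepsilon$.
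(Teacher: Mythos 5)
Your proof is correct. The paper states Lemma~\ref{mesh step} without proof (it is treated as a collection of standard mesh facts from the Bakhvalov-type literature, cf.\ \cite{Roo1:2006-Error}), so the comparison is simply that you have supplied a complete, self-contained verification. The substitution $t_i:=1-2(1-\varepsilon)i/N$, giving $e^{-\beta x_i/\varepsilon}=t_i^{\sigma}$ and $h_{x,i}=\frac{\sigma\varepsilon}{\beta}\ln(t_i/t_{i+1})$, is exactly the right device; the refined local estimate $h_{x,i}\le \frac{2\sigma\varepsilon}{\beta N t_{i+1}}$ together with $t_i/t_{i+1}\le 2$ and $\mu\le\sigma$ cleanly yields the product bound $h_{x,i}^{\mu}e^{-\beta x_i/\varepsilon}\le C\varepsilon^{\mu}N^{-\mu}$, which is the only genuinely nontrivial item.

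One small inaccuracy in your motivation, which does not affect the proof itself: you claim that the crude bound $h_{x,i}\le\sigma\varepsilon$ fails ``at $i=N/2-2$'' because it gives $\varepsilon^{\mu}N^{-\sigma}$. In fact $\varepsilon^{\mu}N^{-\sigma}\le\varepsilon^{\mu}N^{-\mu}$ whenever $\mu\le\sigma$, so the naive bound is fine at that index. Where it actually fails is at small $i$ (e.g.\ $i=0$), where $t_i\approx 1$ and the crude bound gives $(\sigma\varepsilon)^{\mu}\cdot 1=\varepsilon^{\mu}$, which is a factor $N^{\mu}$ too large; the remedy, exactly as you implement it, is to exploit the fact that $h_{x,i}$ itself shrinks like $\varepsilon/(Nt_{i+1})$ precisely where $t_i$ is not small. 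Also worth noting: your computations naturally carry a $\beta$-dependence (e.g.\ $h_{x,\frac N2-2}\le\sigma\varepsilon/\beta$), whereas the lemma's displayed constants ($\sigma\varepsilon$, $\tfrac12\sigma\varepsilon$, $2\sigma N^{-1}$) suppress $\beta$; this is an imprecision in the paper's statement, not in your argument.
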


Furthermore, we also need the following property for our subsequent analysis.
\begin{lemma}\label{special step}
	If Assumption \ref{transposition} holds, we can draw the following conclusions:
	\begin{equation}
		\begin{aligned}
			&h_{x,\frac{N}{2}-1}\le C\varepsilon^{1-\alpha}N^{-\alpha},\\
			&h_{y,\frac{N}{4}-1}=h_{y,\frac{3N}{4}}\le C(\sqrt{\varepsilon})^{1-\alpha}N^{-\alpha},
		\end{aligned}
	\end{equation}
	where $\alpha\in \left( 0,1\right] $.
\end{lemma}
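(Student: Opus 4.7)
The plan is to derive an explicit closed form for each of the two mesh widths from the mesh definition \eqref{B-type}, and then interpolate the resulting logarithm by a power using an elementary inequality.

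First I would evaluate $\phi$ at the two endpoints of the element $[x_{N/2-1},x_{N/2}]$. From $\phi(1/2)=-(\sigma\varepsilon/\beta)\ln\varepsilon$ and $\phi((N/2-1)/N)=-(\sigma\varepsilon/\beta)\ln(\varepsilon+2(1-\varepsilon)/N)$, direct subtraction gives
\[
 h_{x,\frac{N}{2}-1} \;=\; \frac{\sigma\varepsilon}{\beta}\,\ln\!\left(1+\frac{2(1-\varepsilon)}{N\varepsilon}\right).
\]
The analogous computation for $\varphi$ over $[y_{N/4-1},y_{N/4}]$ produces
\[
 h_{y,\frac{N}{4}-1} \;=\; \sigma\sqrt{\varepsilon}\,\ln\!\left(1+\frac{4(1-\varepsilon)}{N\varepsilon}\right),
\]
and the identity $h_{y,3N/4}=h_{y,N/4-1}$ then drops out of the reflection symmetry of the second case of \eqref{B-type} about $y=1/2$.

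The core technical ingredient I would isolate is the interpolation inequality
\[
 \ln(1+s) \;\le\; C_\alpha\, s^{\alpha}, \qquad s>0,\ \alpha\in(0,1],
\]
which I would prove by a two-range split: for $s\le 1$, $\ln(1+s)\le s\le s^{\alpha}$; for $s\ge 1$, the standard bound $\ln t\le (t^{\alpha}-1)/\alpha$ gives $\ln(1+s)\le \ln 2+s^{\alpha}/\alpha\le(\ln 2+1/\alpha)s^{\alpha}$. Substituting $s=2(1-\varepsilon)/(N\varepsilon)$ in the formula for $h_{x,N/2-1}$ and collecting the $\varepsilon$-powers via $\varepsilon\cdot\varepsilon^{-\alpha}=\varepsilon^{1-\alpha}$ gives the first claimed bound; using $s=4(1-\varepsilon)/(N\varepsilon)$ in the formula for $h_{y,N/4-1}$ delivers the $\sqrt{\varepsilon}$-version in the same way. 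The factor $(1-\varepsilon)^\alpha\le 1$, and $C_\alpha$ can be absorbed into the generic constant $C$ because $\alpha$ is a fixed parameter.

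I do not foresee a serious obstacle. Conceptually the lemma is just the observation that, although $\phi'$ (respectively $\varphi'$) blows up as $t\to 1/2$ (respectively $t\to 1/4$), the increment of $\phi$ across a single terminal element blows up only \emph{logarithmically} in $1/(N\varepsilon)$, and any logarithm is power-dominated. The only mild care needed is keeping track of the constant $C_\alpha$ in the logarithmic inequality; the rest is mechanical substitution.
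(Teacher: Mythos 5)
Your $x$-direction argument is correct and is essentially the paper's: compute the exact element width from the mesh map and dominate the resulting logarithm by a power of its argument. The paper applies $\ln x\le x^{\alpha}/\alpha$ directly to the ratio $\frac{\varepsilon+2(1-\varepsilon)/N}{\varepsilon}$, while you rewrite it as $\ln(1+s)$ and prove $\ln(1+s)\le C_\alpha s^\alpha$ by a two-range split; these are interchangeable, and after using $\varepsilon\le N^{-1}$ both land on $C\varepsilon^{1-\alpha}N^{-\alpha}$.

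The $y$-direction, however, does \emph{not} follow ``in the same way,'' and this is a genuine gap in your write-up. The exact formula is $h_{y,N/4-1}=\sigma\sqrt{\varepsilon}\,\ln\bigl(1+\tfrac{4(1-\varepsilon)}{N\varepsilon}\bigr)$: the prefactor is $\sqrt{\varepsilon}$, but the argument of the logarithm still carries $\varepsilon$, not $\sqrt{\varepsilon}$. Your substitution $s=4(1-\varepsilon)/(N\varepsilon)$ therefore gives $h_{y,N/4-1}\le C\sqrt{\varepsilon}\,(N\varepsilon)^{-\alpha}=C\varepsilon^{1/2-\alpha}N^{-\alpha}$, which is strictly weaker than the stated $C(\sqrt{\varepsilon})^{1-\alpha}N^{-\alpha}=C\varepsilon^{(1-\alpha)/2}N^{-\alpha}$ for every $\alpha\in(0,1]$, since $\tfrac12-\alpha<\tfrac{1-\alpha}{2}$. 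In fact the stated $y$-bound cannot hold for the mesh \eqref{B-type} as written: with $\varepsilon\sim N^{-1}$ and $\alpha=1$ one computes $h_{y,N/4-1}\sim\sigma N^{-1/2}\ln 5$, whereas $(\sqrt{\varepsilon})^{1-\alpha}N^{-\alpha}=N^{-1}$. The paper's proof skips the $y$-case with ``can be proved in a similar way,'' so you are mirroring that omission; but the power count is genuinely different, and it closes only under an extra ingredient (for instance $\sqrt{\varepsilon}$ in place of $\varepsilon$ inside the logarithm of $\varphi$, or the strengthened hypothesis $\varepsilon\le N^{-2}$). You should not present it as a mechanical repeat of the $x$-case.
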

\begin{proof}
According to \eqref{B-type}, we can derive that
\begin{equation}
h_{x,\frac{N}{2}-1}=\frac{\sigma\varepsilon}{\beta}\ln\frac{1-2(1-\varepsilon)(\frac{1}{2}-N^{-1})}{1-2(1-\varepsilon)\frac{1}{2}}\le \frac{\sigma\varepsilon}{\beta}\frac{1}{\alpha}(\frac{\varepsilon+2(1-\varepsilon)N^{-1}}{\varepsilon})^\alpha\le C\varepsilon^{1-\alpha}N^{-\alpha},
\end{equation}
here we use the standard arguments $\ln x\le \frac{x^\alpha}{\alpha}\quad \alpha\in (0,1]$. $h_{y,\frac{N}{4}-1}$ and $h_{y,\frac{3N}{4}}$ can be proved in a similar way.
\end{proof}
Then, we divide $\Omega$ into four subdomains:
\begin{align*}
	&\Omega_{s}:=\left[x_{\frac{N}{2}-1},1\right]\times\left[y_{\frac{N}{4}-1},y_{\frac{3N}{4}+1}\right],&&
	\Omega_{y}:=\left[x_{\frac{N}{2}-1},1 \right]\times(\left[0,y_{\frac{N}{4}-1} \right]\cup\left[y_{\frac{3N}{4}+1},1\right] ),\\
	&\Omega_{x}:=\left[0,x_{\frac{N}{2}-1}\right]\times\left[ y_{\frac{N}{4}-1},y_{\frac{3N}{4}+1} \right],&&
	\Omega_{xy}:=\left[0,x_{\frac{N}{2}-1}\right]\times(\left[0,y_{\frac{N}{4}-1} \right]\cup\left[y_{\frac{3N}{4}+1},1\right] ).
\end{align*}
In addition, we define $\Omega_{0}$ as $\left[x_{\frac{N}{2}-1},x_{\frac{N}{2}} \right]\times\left[0,1 \right] $ for convenience.
\subsection{FEM}
Suppose that $V^{N}\subset H_{0}^{1}(\Omega)$ is the piecewise bilinear finite element space of \eqref{model problem}, whose definition is
$$V^{N}=\left\lbrace v\in C(\overline{\Omega } ):\ v|_{\partial\Omega=0}\ \text{and}\ v|_{K}\in \mathcal{Q}_{1}(K),\ \forall K\in \mathcal{T}_{N} \right\rbrace,$$ where
\begin{equation*}
\mathcal{Q}_1(K)=\text{span}\left\lbrace x^iy^j:0\le i,j\le1\right\rbrace.
\end{equation*}
Then the corresponding FEM is: Find $u^N\in V^N$ satisfying $a(u^N,v)=(f,v)\quad\forall v\in V^N$, where
\begin{equation}\label{define u}
	a(u^N,v):=\varepsilon(\nabla u^N,\nabla v)+(-bu_{x}^N,v)+(cu^N,v)\quad\forall v\in V^N.
\end{equation}
According to the conditions of \eqref{model problem}, the following coercivity holds 
\begin{equation}\label{coercivity}
	a(v,v)\ge C\Vert v \Vert^2_{\varepsilon},\qquad \forall v\in V^{N},
\end{equation}
where $C$ is independent of $\varepsilon$ and $N$, and the energy norm is defined as
$$\Vert v\Vert_{\varepsilon}^{2}:=\varepsilon\left| v\right|_{1}^{2}+\Vert v\Vert^{2}.$$
It follows that $u^N$ is well defined by \eqref{define u} (see \cite{Bre1Sus2:2008-Mathematical} and references therein).

In this paper, we will analyze supercloseness property under the energy norm. Let $v=u^{N}-\varPi u$, where $u^N$ is the finite element solution and $\varPi u$ is the interpolation of $u$. From \eqref{coercivity} and the Galerkin orthogonality property $a(u^{N}-u,v)=0,\, \forall v\in V^{N}$ one has
\begin{equation}\label{variation}
	C\Vert v\Vert_{\varepsilon}^{2}\le a(v,v)=a(u-\varPi u,v)= \varepsilon(\nabla(u-\varPi u),\nabla v)+(-b(u-\varPi u)_{x},v)+(c(u-\varPi u),v).
\end{equation}

\section{Interpolation errors and new interpolation}\label{sec. 3}
In this section, we will give some standard interpolation errors and construct a new interpolation for the smooth part of the solution. Additionally, some preliminary conclusions will be presented.

\subsection{Interpolation errors}
The following lemma can refer to \cite{Guo1Sty2:1997-Pointwise} for more information.
\begin{lemma}\label{interpolation error}
Let $K\in \mathcal{T}_{N}$ and suppose that $K$ is $K_{i,j}$. Assume that $w\in W^{2,p}(\Omega)$ and denote by $w^{I}$ the standard Lagrange interpolation of $w$ at the vertices of $K$. Then, 
	\begin{equation*}
		\begin{aligned}
			&\Vert
			w-w^{I}\Vert_K\le C\sum_{s+t=2}h_{x,i}^{s}h_{y,j}^{t}\Vert\frac{\partial^2 w}{\partial x^{s}\partial y^{t}} \Vert_K,\\
			&\Vert(w-w^{I})_{x}\Vert_K\le C\sum_{s+t=1}h_{x,i}^{s}h_{y,j}^{t}\Vert\frac{\partial^2 w}{\partial x^{s+1}\partial y^{t}} \Vert_K,\\
			&\Vert
			(w-w^{I})_{y}\Vert_K\le C\sum_{s+t=1}h_{x,i}^{s}h_{y,j}^{t}\Vert\frac{\partial^2 w}{\partial x^{s}\partial y^{t+1}} \Vert_K,
		\end{aligned}
	\end{equation*}
	where $s$ and $t$ are non-negative integers.  
\end{lemma}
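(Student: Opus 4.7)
The plan is to establish the three estimates by combining an anisotropic scaling to the reference square with the tensor-product structure of the bilinear interpolant. I would first map $K_{i,j}$ onto $\hat K=[0,1]^2$ through the affine change of variables $x=x_i+h_{x,i}\hat x$, $y=y_j+h_{y,j}\hat y$, so that $\partial_x=h_{x,i}^{-1}\partial_{\hat x}$, $\partial_y=h_{y,j}^{-1}\partial_{\hat y}$, while $L^2$ norms scale by the Jacobian factor $(h_{x,i}h_{y,j})^{1/2}$. For the first estimate I would invoke the classical Bramble--Hilbert lemma on $\hat K$: since the operator $\hat w\mapsto\hat w-\hat w^I$ is continuous from $H^2(\hat K)$ into $L^2(\hat K)$ and is exact on $\mathbb{P}_1(\hat K)\subset Q_1(\hat K)$, one has $\|\hat w-\hat w^I\|_{L^2(\hat K)}\le C|\hat w|_{H^2(\hat K)}$. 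Scaling each second derivative back to $K$ and collecting powers of $h_{x,i}$ and $h_{y,j}$ yields the stated bound $\sum_{s+t=2}h_{x,i}^s h_{y,j}^t\|\partial_x^s\partial_y^t w\|_K$ directly.

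For the derivative estimates I would use the tensor-product splitting $w^I=\pi_x\pi_y w$, where $\pi_x,\pi_y$ denote the one-dimensional linear nodal interpolants in each coordinate, to write $w-w^I=(I-\pi_x)w+\pi_x(I-\pi_y)w$. Differentiating in $x$, the first piece is handled slice-by-slice by the one-dimensional error estimate, giving $\|((I-\pi_x)w)_x\|_K\le C h_{x,i}\|w_{xx}\|_K$. For the second piece, $(\pi_x g)_x$ is constant in $x$ and equals $h_{x,i}^{-1}[g(x_{i+1},\cdot)-g(x_i,\cdot)]=h_{x,i}^{-1}\int_{x_i}^{x_{i+1}} g_x(s,\cdot)\,ds$ with $g=(I-\pi_y)w$; Cauchy--Schwarz in $s$ together with the one-dimensional bound $\|(I-\pi_y)w_x\|_K\le C h_{y,j}\|w_{xy}\|_K$ then supplies the missing $h_{y,j}\|w_{xy}\|_K$ contribution. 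The estimate for $\|(w-w^I)_y\|_K$ follows by interchanging the roles of $x$ and $y$.

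The main obstacle will be the anisotropic sharpness required in the derivative bounds. A naive Bramble--Hilbert argument applied to $(\hat w-\hat w^I)_{\hat x}$ followed by rescaling produces the spurious term $h_{x,i}^{-1}h_{y,j}^2\|w_{yy}\|_K$, which is unacceptable on long thin cells where $h_{y,j}\gg h_{x,i}$ — precisely the regime relevant to the Bakhvalov-type mesh. Avoiding it requires exploiting the fact that the operator $\hat w\mapsto(\hat w-\hat w^I)_{\hat x}$ annihilates every function depending only on $\hat y$, not merely $\mathbb{P}_1$; the tensor-product splitting above encodes exactly this larger kernel and is what makes the argument produce the sharp anisotropic bound stated in the lemma.
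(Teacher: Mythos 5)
The paper does not actually prove this lemma; it simply cites Guo and Stynes (1997), so there is no internal argument to compare against. Your derivation is nevertheless a correct and standard route to these anisotropic interpolation estimates on tensor-product rectangles. The $L^2$ bound via anisotropic scaling to $\hat K$ plus Bramble--Hilbert with kernel $\mathbb{P}_1$ is fine, and the resulting powers of $h_{x,i}$, $h_{y,j}$ match the stated sum over $s+t=2$. Your diagnosis of why the same strategy fails for $(w-w^I)_x$ is exactly right: Bramble--Hilbert with kernel $\mathbb{P}_1$ forces the spurious $h_{x,i}^{-1}h_{y,j}^{2}\Vert w_{yy}\Vert_K$ term on rescaling, which is disastrous on the long thin cells that are the whole point of a layer-adapted mesh. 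The tensor-product splitting $w-w^I=(I-\pi_x)w+\pi_x(I-\pi_y)w$ is the standard remedy, and your handling of each piece — the slice-by-slice one-dimensional bound for $((I-\pi_x)w)_x$, and the observation that $(\pi_x g)_x$ is constant in $x$, equal to $h_{x,i}^{-1}\int_{x_i}^{x_{i+1}}(I-\pi_y)w_x(s,\cdot)\,\mathrm{d}s$, so that Cauchy--Schwarz transfers the bound to $\Vert(I-\pi_y)w_x\Vert_K\le Ch_{y,j}\Vert w_{xy}\Vert_K$ — all checks out. The only cosmetic remark is that you should be using the first-order one-dimensional estimate $\Vert(I-\pi_y)v\Vert\le Ch_{y,j}\Vert v_y\Vert$, valid because $I-\pi_y$ annihilates constants; this is what you implicitly do. Your closing observation, that the larger kernel of $\partial_{\hat x}\circ(I-\hat\pi)$ (all functions of $\hat y$ alone) is exactly what the tensor splitting encodes, is the correct structural explanation for why the sharp bound emerges.
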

\subsection{New interpolation}
In this subsection, we design a new interpolation $\Pi S$ for the smooth part $S$. Besides, for the exponential layer $E_1$, another interpolation $\pi E_1$ is constructed based on \cite{Zhang1Liu2:2020-Optimal}.

Let $\theta_{i,j}(x,y)$ be the standard nodal basis functions with respect to the node $(x_i,y_j)$ in the finite element space $V^N$. For any $v\in C^0(\overline{\Omega})$ its Lagrange interpolation $v^I\in V^N$ is defined by
\begin{equation*}
v^{I}(x,y)=\sum_{i=0}^{N}\sum_{j=0}^{N}v(x_{i},y_{j})\theta_{i,j}(x,y).
\end{equation*}
In the subsequent analysis, we will drop $\mathrm{d}x\mathrm{d}y$ from $\int_{[]\times[]}\cdot\mathrm{d}x\mathrm{d}y$ and let $\theta_{i,j}(x,y)=\theta_{i,j}$ for simplicity.

For the solution $u$ to \eqref{model problem}, recall  \eqref{decomposition of u} and then define the interpolation $\varPi u$ to $u$ by
\begin{equation}\label{interpolation}
	\varPi u=\Pi S+\pi E_{1}+E_{2}^{I}+E_{12}^{I}.
\end{equation}
Define
\begin{equation}\label{new}
	\Pi S=\left\{\begin{aligned}
		&\mathcal{P}S\quad&&(x,y)\in[x_{\frac{N}{2}-1},x_{\frac{N}{2}}]\times[y_{\frac{N}{4}},y_{\frac{3N}{4}}],\\
		&\mathcal{L}S\quad&&(x,y)\in[x_{\frac{N}{2}-2},x_{\frac{N}{2}-1}]\times[y_{\frac{N}{4}},y_{\frac{3N}{4}}],\\
		&S^I\quad&&\text{other},
	\end{aligned}
	\right.
\end{equation}
where $\mathcal{P}S=\sum_{i=\frac{N}{2}-1}^{\frac{N}{2}}\sum_{j=\frac{N}{4}}^{\frac{3N}{4}}\mathcal{P}S(x_i,y_j)\theta_{i,j}$ satisfies
\begin{equation}\label{new interpolation}
		\begin{aligned}
			&\int_{K_{\frac{N}{2}-1,j-1}}(S-\mathcal{P}S)_x\theta_{\frac{N}{2},j}+\int_{K_{\frac{N}{2}-1,j}}(S-\mathcal{P}S)_x\theta_{\frac{N}{2},j}\\
			=&-\frac{H^2}{12}\left[\int_{y_{j-1}}^{y_{j}}\frac{\partial^2 S}{\partial x^2}\theta_{\frac{N}{2},j}(x_{\frac{N}{2}},y)\mathrm{d}y+\int_{y_j}^{y_{j+1}}\frac{\partial^2 S}{\partial x^2}\theta_{\frac{N}{2},j}(x_{\frac{N}{2}},y)\mathrm{d}y\right] \quad j=\frac{N}{4}+1,\dots,\frac{3N}{4}-1,
		\end{aligned}
\end{equation}
and
\begin{align}
	    &\mathcal{P}S(x_{\frac{N}{2}-1},y_j)=S(x_{\frac{N}{2}-1},y_j)\quad&&j=\frac{N}{4},\frac{3N}{4}\label{new interpolation 2}\\
		&\mathcal{P}S(x_{\frac{N}{2}},y_j)=S(x_{\frac{N}{2}},y_j)\quad&&j=\frac{N}{4},\frac{N}{4}+1,\dots,\frac{3N}{4}\label{new interpolation 1},
\end{align}
$\mathcal{L}S$ satisfies
\begin{equation}\label{new interpolation2}
	\left\{\begin{aligned}
		&\mathcal{L}S(x_{\frac{N}{2}-1},y_{j})=PS(x_{\frac{N}{2}-1},y_{j})\quad&&j=\frac{N}{4},\dots,\frac{3N}{4},\\
		&\mathcal{L}S(x_{\frac{N}{2}-2},y_{j})=S(x_{\frac{N}{2}-2},y_{j})\quad&&j=\frac{N}{4},\dots,\frac{3N}{4}.
	\end{aligned}
	\right.
\end{equation}
Refer to \cite{Zhang1Liu2:2020-Optimal}, we define $\pi E_1$ as
\begin{equation}\label{interpolation E}
	\pi E_1=E_1^I-\mathcal{Q}E_1+\mathcal{B}E_1,
\end{equation}
where
\begin{align}
	&\mathcal{Q}E_1=\sum_{j=0}^N E_1(x_{\frac{N}{2}-1},y_j)\theta_{\frac{N}{2}-1,j}\label{QE1},\\
	&\mathcal{B}E_1=\sum_{j=0,N} E_1(x_{\frac{N}{2}-1},y_j)\theta_{\frac{N}{2}-1,j}\label{BE1},
\end{align}
are boundary corrections to ensure the interpolation in 2D satisfies homogeneous Dirichlet boundary conditions. In addition, $E_2^I$ and $E_{12}^I$ are the standard Lagrange interpolations of $E_2$ and $E_{12}$, respectively.

\begin{remark}
For the supercloseness analysis of the smooth part, the diffusion term $\varepsilon(\nabla(S-S^I),\nabla v)$ can reach almost order $2$ on $\Omega$ using the integral identities \eqref{integral identity}; the convection term $(-b(S-S^I)_x,v)$ can achieve almost order $2$ on most regions with the method utilized in \cite{Zhang1:2003-Finite}, while on the region $[x_{\frac{N}{2}-1},x_{\frac{N}{2}+1}]\times[y_{\frac{N}{4}},y_{\frac{3N}{4}}]$, some terms cannot be better estimated with standard arguments in previous studies. Thus, we consider designing a new interpolation $\Pi S$ for sharper estimates for the tough terms: \eqref{tough 1} and \eqref{tough 2}. In the one-dimensional case \cite{zhang1Liu2:2022-Supercloseness}, intractable terms can cancel each other out with the special designed interpolation. But this idea cannot be generalized to 2D, indicating that constructing an interpolation that is both continuous and capable of optimizing the estimate of the tough terms is a very challenging task in 2D. With deep consideration, the new interpolation \eqref{new} is designed: Condition \eqref{new interpolation} is used for sharper estimates for \eqref{tough 1} and \eqref{tough 2}; Condition \eqref{new interpolation 2}, \eqref{new interpolation 1} and \eqref{new interpolation2} are employed to ensure the continuity. In addition, existence and uniqueness of $\Pi S$ is proved below.
\end{remark}

\begin{remark}
On subdomain $[x_{\frac{N}{2}-1},x_{\frac{N}{2}}]\times[y_{\frac{N}{4}},y_{\frac{3N}{4}}]$, let $S-\mathcal{P}S=S-S^I+S^I-\mathcal{P}S$. Then, for $j=\frac{N}{4}+1,\dots,\frac{3N}{4}-1$, we rewrite \eqref{new interpolation} to
\begin{equation}\label{discrete system}
\begin{aligned}
		&\frac{1}{h}\left\lbrace\int_{K_{\frac{N}{2}-1,j-1}}\left[(S^I-\mathcal{P}S)(x_{\frac{N}{2}-1},y_{j-1})\theta_{\frac{N}{2}-1,j-1}+(S^I-\mathcal{P}S)(x_{\frac{N}{2}-1},y_{j})\theta_{\frac{N}{2}-1,j}\right]_x\theta_{\frac{N}{2},j}\right.\\
		&+\left.\int_{K_{\frac{N}{2}-1,j}}\left[(S^I-\mathcal{P}S)(x_{\frac{N}{2}-1},y_{j})\theta_{\frac{N}{2}-1,j}+(S^I-\mathcal{P}S)(x_{\frac{N}{2}-1},y_{j+1})\theta_{\frac{N}{2}-1,j+1}\right]_x\theta_{\frac{N}{2},j}\right\rbrace\\
		=&-\tau_{j},
\end{aligned}
\end{equation}
where
\begin{equation*}
\tau_{j}=\ell_{1,j}+\ell_{2,j}+\ell_{3,j},
\end{equation*}
\begin{align*}
&\ell_{1,j}=:\frac{1}{h}\left\lbrace\frac{H^2}{12}\left[\int_{y_{j-1}}^{y_{j}}\frac{\partial^2 S}{\partial x^2}\theta_{\frac{N}{2},j}(x_{\frac{N}{2}},y)\mathrm{d}y+\int_{y_j}^{y_{j+1}}\frac{\partial^2 S}{\partial x^2}\theta_{\frac{N}{2},j}(x_{\frac{N}{2}},y)\mathrm{d}y\right]\right\rbrace,\\
&\ell_{2,j}=:\frac{1}{h}\left[\int_{K_{\frac{N}{2}-1,j-1}}(S-S^I)_x\theta_{\frac{N}{2},j}+\int_{K_{\frac{N}{2}-1,j}}(S-S^I)_x\theta_{\frac{N}{2},j}\right],
\end{align*}
\begin{align*}
\ell_{3,j}=:&\frac{1}{h}\left\lbrace\int_{K_{\frac{N}{2}-1,j-1}}\left[(S^I-\mathcal{P}S)(x_{\frac{N}{2}},y_{j-1})\theta_{\frac{N}{2},j-1}+(S^I-\mathcal{P}S)(x_{\frac{N}{2}},y_{j})\theta_{\frac{N}{2},j}\right]_x\theta_{\frac{N}{2},j}\right.\\
&\left.+\int_{K_{\frac{N}{2}-1,j}}\left[(S^I-\mathcal{P}S)(x_{\frac{N}{2}},y_{j})\theta_{\frac{N}{2},j}+(S^I-\mathcal{P}S)(x_{\frac{N}{2}},y_{j+1})\theta_{\frac{N}{2},j+1}\right]_x\theta_{\frac{N}{2},j}\right\rbrace.
\end{align*}
Recall \eqref{new interpolation 1}, we have
\begin{equation*}
\ell_{3,j}=0.
\end{equation*}
And we can obtain
\begin{equation*}
	\tau_{j}=\ell_{1,j}+\ell_{2,j}=\mathcal{O}(N^{-2}).
\end{equation*}
by the standard approximation theory.

For convenience, denote $(S^I-PS)(x_{\frac{N}{2}-1},y_j)=:\beta_j\,(j=\frac{N}{4}+1,\dots,\frac{3N}{4}-1)$. Then, let us look directly at the discrete system \eqref{discrete system} which we will rewrite in the form
\begin{equation}\label{linear equations}
	Db=-\tau,
\end{equation}
where
\begin{equation*}
	D=\begin{pmatrix}
		4	&  1& 0&  0&  ...&  0& 0&0 \\
		1	&  4&  1&  0&  ...&  0&  0&0 \\
		0	&  1&  4&  1&  ...&  0&  0&0 \\
		0	&  0&  1&  4&  ...&  0&  0&0 \\
		...	&  ...&  ...&  ...&  ...&  ...&  ...& \\
		0	&  0&  0& 0& ... & 1 & 4 &1 \\
		0	&  0&  0& 0 & ... & 0 & 1 &4
	\end{pmatrix}_{(\frac{N}{2}-1)\times (\frac{N}{2}-1)}\quad b=\begin{pmatrix}
		\beta_{\frac{N}{4}+1}	\\
		\beta_{\frac{N}{4}+2}	\\
		...	\\
		\beta_{\frac{3N}{4}-1}	
	\end{pmatrix}\quad \tau=\begin{pmatrix}
		\tau_{\frac{N}{4}+1}	\\
		\tau_{\frac{N}{4}+2}	\\
		...	\\
		\tau_{\frac{3N}{4}-1}	
	\end{pmatrix}.
\end{equation*}
Apparently, the determinant of tridiagonal matrix D is not zero, so the solution to the system of linear equations \eqref{linear equations} exists and is unique. This, combined with \eqref{new interpolation 1} and \eqref{new interpolation 2}, indicates that interpolation $\mathcal{P}S$ exists and is unique.

Besides, see that the max-norm of $b$ is 
\begin{equation*}
	\Vert b\Vert_{\infty}=\max_{\frac{N}{4}+1\le j\le\frac{3N}{4}-1}|\beta_j|=|\beta_{j^*}|,
\end{equation*}
According to \eqref{linear equations}, we find that	\begin{equation*}
		\left\{\begin{aligned}
			&3|\beta_{j^*}|\le|4\beta_{j^*}+\beta_{j^*+1}|=|\tau_{j^*}|=\mathcal{O}(N^{-2})\quad&&\text{ if  $j^*=\frac{N}{4}+1$},\\
			&2|\beta_{j^*}|\le|\beta_{j^*-1}+4\beta_{j^*}+\beta_{j^*+1}|=|\tau_{j^*}|=\mathcal{O}(N^{-2})\quad&&\text{ if $\frac{N}{4}+2\le j^*\le \frac{3N}{4}-2$},\\
			&3|\beta_{j^*}|\le|\beta_{j^*-1}+4\beta_{j^*}|=|\tau_{j^*}|=\mathcal{O}(N^{-2})\quad&&\text{if $j^*=\frac{3N}{4}-1$}.
			\end{aligned}
			\right.
	\end{equation*}
Thus, we can conclude that
\begin{equation}\label{dot error}
	\Vert b\Vert_{\infty}\le CN^{-2}\quad\text{i.e.}\quad |(S^I-\mathcal{P}S)(x_{\frac{N}{2}-1},y_j)|\le CN^{-2},\quad j=\frac{N}{4}+1,\dots,\frac{3N}{4}-1.
\end{equation}
\end{remark}

We need the following interpolation bound for our later analysis.
\begin{lemma}\label{max-norm error }
Let $K\in\mathcal{T}_N$, Assumptions \ref{bound} and \ref{transposition} hold. $\Pi S$ denotes the new interpolation of $S$. Then there exists a constant $C$ such that the following inequality holds:
\begin{equation*}
\Vert S-\Pi S\Vert_{L^\infty(K)}\le CN^{-2}.
\end{equation*}
\begin{proof}
Triangle inequality generates
\begin{equation}
\Vert S-\Pi S\Vert_{L^\infty(K)}\le \Vert S-S^I \Vert_{L^\infty(K)}+\Vert S^I-\Pi S\Vert_{L^\infty(K)},
\end{equation}
where
\begin{equation}\label{max-norm 1}
\Vert S-S^I\Vert_{L^\infty(K)}\le CN^{-2},
\end{equation}
is derect with Lemma \ref{interpolation error}. And using \eqref{dot error}, we can derive that
\begin{equation}\label{max-norm 2}
\Vert S^I-\Pi S\Vert_{L^\infty(K)}\le CN^{-2}.
\end{equation}
Thus we have done.
\end{proof}
\end{lemma}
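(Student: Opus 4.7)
The plan is to apply the triangle inequality
\[
\|S-\Pi S\|_{L^\infty(K)} \le \|S-S^I\|_{L^\infty(K)} + \|S^I-\Pi S\|_{L^\infty(K)},
\]
and bound the two pieces separately. The first is the standard bilinear interpolation error in maximum norm: Assumption \ref{bound} gives $\|S\|_{W^{2,\infty}(K)}\le C$ uniformly in $\varepsilon$, while Lemma \ref{mesh step} together with Lemma \ref{special step} (applied with $\alpha=1$ on the transition elements, using $\varepsilon\le N^{-1}$ from Assumption \ref{transposition}) shows $h_{x,i},h_{y,j}\le CN^{-1}$ on every element of $\mathcal{T}_N$. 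The pointwise analogue of Lemma \ref{interpolation error} then immediately yields $\|S-S^I\|_{L^\infty(K)}\le C(h_{x,i}^2+h_{y,j}^2)\le CN^{-2}$.

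For the second piece I would use the piecewise definition of $\Pi S$ in \eqref{new}. Outside the strip $[x_{\frac{N}{2}-2},x_{\frac{N}{2}}]\times[y_{\frac{N}{4}},y_{\frac{3N}{4}}]$ the interpolation coincides with $S^I$, so $S^I-\Pi S$ vanishes identically on $K$. On elements inside the strip, $\Pi S$ is piecewise $\mathcal{Q}_1$ on $\mathcal{T}_N$ by construction, so $S^I-\Pi S$ is a bilinear function on each $K$, and its $L^\infty$-norm on $K$ is attained at one of the four vertices. It therefore suffices to control the nodal values of $S^I-\Pi S$.

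The continuity conditions \eqref{new interpolation 2}, \eqref{new interpolation 1}, and \eqref{new interpolation2} force $S^I-\Pi S$ to vanish at every vertex of the strip except possibly those lying on the vertical line $x=x_{\frac{N}{2}-1}$. At those nodes the bound $|(S^I-\mathcal{P}S)(x_{\frac{N}{2}-1},y_j)|\le CN^{-2}$ was already established in \eqref{dot error} by solving the tridiagonal system \eqref{linear equations}. For elements inside $[x_{\frac{N}{2}-2},x_{\frac{N}{2}-1}]\times[y_{\frac{N}{4}},y_{\frac{3N}{4}}]$, $\mathcal{L}S$ inherits those same right-edge nodal values from $\mathcal{P}S$ via \eqref{new interpolation2} and matches $S$ exactly on the left edge, so the same $O(N^{-2})$ bound transfers. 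Adding the two contributions closes the argument. The genuine work has already been done in deriving \eqref{dot error}; the present lemma is essentially a packaging step — reducing the $L^\infty$ norm of a piecewise bilinear function to its nodal values and then invoking the nodal bound — and the only minor point worth verifying is that the mesh-width estimate $h_{x,i},h_{y,j}\le CN^{-1}$ really is uniform across all of $\mathcal{T}_N$.
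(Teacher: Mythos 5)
Your proof is correct and follows essentially the same approach as the paper (triangle inequality, then Lemma~\ref{interpolation error} for $S-S^I$ and \eqref{dot error} for $S^I-\Pi S$), while filling in the details the paper leaves implicit: the uniform mesh-width bound $h_{x,i},h_{y,j}\le CN^{-1}$ (already direct from Lemma~\ref{mesh step} plus $\varepsilon\le N^{-1}$, even without invoking Lemma~\ref{special step}), the reduction of the $L^\infty$ norm of the piecewise-bilinear function $S^I-\Pi S$ to its nodal values, and the observation that the continuity conditions \eqref{new interpolation 2}, \eqref{new interpolation 1}, \eqref{new interpolation2} force those nodal values to vanish everywhere except on the line $x=x_{N/2-1}$.
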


The following stability of Lagrange interpolation will be frequently used later: $\forall K\in\mathcal{T}_N$ and $v\in C^1(K)$, the Lagrange interpolation $v^I$ satisfies
\begin{equation}\label{stability of Lagrange interpolation}
	\vert v^I\vert_{1,L^\infty(K)}\le C\vert v\vert_{1,L^\infty(K)}.
\end{equation}

\begin{lemma}\label{error of E}
Let Assumptions \ref{bound} and \ref{transposition} hold. $E_i^I$ denote the standard Lagrange interpolations of $E_i$, $i=1,2,12$, respectively. Then there exists a constant $C$ such that the following inequalities hold:
	\begin{align}
		&\Vert E_1-E_1^I\Vert+\Vert E_2-E_2^I\Vert+\Vert E_{12}-E_{12}^I\Vert\le CN^{-\sigma},\label{error 1}\\
		&\Vert E_1-E_1^I\Vert_\varepsilon+\Vert E_2-E_2^I\Vert_\varepsilon+\Vert E_{12}-E_{12}^I\Vert_\varepsilon\le CN^{-1},\label{error 2}\\
		&\Vert\mathcal{Q}E_1\Vert_\varepsilon+\Vert\mathcal{B}E_1\Vert_\varepsilon\le C(1+\varepsilon^{\frac{1}{4}}N^{\frac{1}{2}})N^{-\sigma}\label{error 3}.
	\end{align}
\end{lemma}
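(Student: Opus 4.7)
The overall strategy is to treat the three inequalities separately by splitting $\Omega$ along the transition points $x=x_{\frac{N}{2}-1}$, $y=y_{\frac{N}{4}-1}$, and $y=y_{\frac{3N}{4}+1}$ into the layer region, where the derivatives are large but the mesh is fine, and the smooth complement, where the layer function itself is exponentially small. In each region I combine Lemma~\ref{interpolation error}, the derivative bounds of Assumption~\ref{bound}, and the mesh identities of Lemma~\ref{mesh step}; the workhorse throughout is the inequality $h_{x,i}^{\mu}e^{-\beta x_i/\varepsilon}\le C\varepsilon^{\mu}N^{-\mu}$ (and its $y$-analogue) for $0\le\mu\le\sigma$.

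For \eqref{error 1}, a cellwise application of Lemma~\ref{interpolation error} on the layer strip produces terms like $h_{x,i}^{s}h_{y,j}^{t}\varepsilon^{-s}\|e^{-\beta x/\varepsilon}\|_K$ with $s+t=2$; taking $\mu=\sigma$ in the mesh inequality and summing over cells yields $\|E_1-E_1^I\|\le CN^{-\sigma}$ on the strip, while outside one simply uses $|E_1|\le CN^{-\sigma}$ pointwise from Lemma~\ref{mesh step}. The arguments for $E_2$ and $E_{12}$ are analogous, using the $\sqrt{\varepsilon}$ scaling in $y$. For \eqref{error 2}, the $L^2$ part is subsumed by \eqref{error 1} since $N^{-\sigma}\le N^{-1}$; the gradient part $\sqrt{\varepsilon}\,|E_i-E_i^I|_1$ is handled by applying Lemma~\ref{interpolation error} to first derivatives together with $\mu=2$ in the mesh inequality on the layer, and by direct exponential-decay estimates of $E_{i,x}$, $E_{i,y}$ and their Lagrange interpolants outside.

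The bound \eqref{error 3} must be handled directly, since $\mathcal{Q}E_1$ and $\mathcal{B}E_1$ are not interpolation errors of a smooth function. The nodal values $E_1(x_{\frac{N}{2}-1},y_j)$ are all $\mathcal{O}(N^{-\sigma})$ by Lemma~\ref{mesh step}, and both functions are supported in the two-column strip $[x_{\frac{N}{2}-2},x_{\frac{N}{2}}]\times[0,1]$, whose $x$-width is $\mathcal{O}(\varepsilon+N^{-1})$. The $L^2$ norm and the $\sqrt{\varepsilon}\,\|\partial_x(\cdot)\|$ contribution both come out to $CN^{-\sigma}$, because $h_{x,\frac{N}{2}-2}$ and $h_{x,\frac{N}{2}-1}$ are at least of order $\varepsilon$, so the inverse bound on $\partial_x\theta$ is harmless after the $\sqrt{\varepsilon}$ weighting. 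The delicate step is $\sqrt{\varepsilon}\,\|\partial_y \mathcal{Q}E_1\|$: on each cell one has $|\partial_y\mathcal{Q}E_1|\le CN^{-\sigma}/h_{y,j}$, hence $\int_K|\partial_y\mathcal{Q}E_1|^2\le CN^{-2\sigma}h_x/h_{y,j}$, and summing over $j$ invokes $\sum_j 1/h_{y,j}=\mathcal{O}(N^{2}/\sqrt{\varepsilon})$, which is a genuine Bakhvalov-grading effect arising from the fine mesh near $y=0,1$. After multiplying by $\sqrt{\varepsilon}$ this produces exactly the $\varepsilon^{1/4}N^{1/2}$ factor in the claim. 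The $\mathcal{B}E_1$ calculation follows the same template with only the two nonzero nodal values at $j=0,N$.

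I expect the principal obstacle to be the sharp $\sqrt{\varepsilon}\,\|\partial_y \mathcal{Q}E_1\|$ estimate: since the values $E_1(x_{\frac{N}{2}-1},y_j)$ do not decay in $j$, the proof must exploit both the assumption $\varepsilon\le N^{-1}$ and the precise Bakhvalov grading in $y$ to produce the key sum $\sum_j 1/h_{y,j}=\mathcal{O}(N^{2}/\sqrt{\varepsilon})$. Overestimating the $x$-width of the strip, or discarding $\varepsilon\le N^{-1}$ at the wrong moment, would already lose the optimal power of $\varepsilon$ in the final $\varepsilon^{1/4}N^{1/2}$ term.
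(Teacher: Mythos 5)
Your proposal is correct and follows essentially the same route as the paper: domain decomposition at the transition lines, Lemma~\ref{interpolation error} combined with the mesh--decay inequality $h_{x,i}^{\mu}e^{-\beta x_i/\varepsilon}\le C\varepsilon^{\mu}N^{-\mu}$ on the fine part, exponential smallness plus stability of the Lagrange interpolant on the coarse part, and $\varepsilon$-norms of the hat functions for $\mathcal{Q}E_1$, $\mathcal{B}E_1$. Your tracing of the $\varepsilon^{1/4}N^{1/2}$ factor to $\sqrt{\varepsilon}\,\Vert\partial_y\mathcal{Q}E_1\Vert$ via $\sum_j h_{y,j}^{-1}=\mathcal{O}(\varepsilon^{-1/2}N^2)$ is in fact more careful than the paper's displayed chain, which asserts $\Vert\mathcal{Q}E_1\Vert_\varepsilon^2\le CN^{-2\sigma}$ and thereby drops exactly this $\varepsilon^{1/2}N$ contribution from the $\varepsilon h_{y,j}^{-1}h_{x,\frac{N}{2}-1}$ term; the bound stated in the lemma is consistent with your computation rather than with that final step of the paper's chain.
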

\begin{proof}
(a) Consider $\Vert E_1-E_1^I\Vert_\varepsilon$. We split $\Vert E_1-E_1^I\Vert$ into 
	\begin{align*}
		\Vert E_1-E_1^I\Vert^2=\Vert E_1-E_1^I\Vert_{[0,x_{\frac{N}{2}-1}]\times[0,1]}^2+\Vert E_1-E_1^I\Vert_{\Omega_0}^2+\Vert E_1-E_1^I\Vert_{[x_{\frac{N}{2}},1]\times[0,1]}^2.
	\end{align*}
	Use Lemma \ref{interpolation error}, Assumption \ref{bound} and Lemma \ref{mesh step} to derive
	\begin{equation}\label{E1 1}
		\begin{aligned}
			\Vert E_1-E_1^I\Vert_{[0,x_{\frac{N}{2}-1}]\times[0,1]}^2&\le C\sum_{i=0}^{\frac{N}{2}-2}\sum_{j=0}^{N-1}\sum_{s+t=2}h_{x,i}^{2s}h_{y,j}^{2t}\Vert\frac{\partial^2 E_1}{\partial x^s\partial y^t}\Vert_{K_{i,j}}^2\\
			&\le C\sum_{i=0}^{\frac{N}{2}-2}\sum_{j=0}^{N-1}\sum_{s+t=2}h_{x,i}^{2s}h_{y,j}^{2t}(\varepsilon^{-s}e^{-\frac{\beta x_i}{\varepsilon}})^2h_{x,i}h_{y,j}\\
			&\le C\varepsilon N^{-4}.
		\end{aligned}
	\end{equation}
Triangular inequality, Assumption \ref{bound}, Lemma \ref{mesh step} and \eqref{stability of Lagrange interpolation} yield
	\begin{equation*}
		\Vert E_1-E_1^I\Vert_{\Omega_0}^2\le\Vert E_1\Vert_{\Omega_0}^2+\Vert E_1^I\Vert_{\Omega_0}^2,
	\end{equation*}
	where
	\begin{align*}
		&\Vert E_1\Vert_{\Omega_0}^2\le C\int_{\Omega_0}(e^{-\frac{\beta x}{\varepsilon}})^2\le C\varepsilon N^{-2\sigma},\\
		&\Vert E_1^I\Vert_{\Omega_0}^2\le C\int_{\Omega_0}(N^{-\sigma})^2\le CN^{-2\sigma-1}.
	\end{align*}
	Thus,
	\begin{equation}\label{E1 2}
		\Vert E_1-E_1^I\Vert_{\Omega_0}^2\le CN^{-2\sigma-1}.
	\end{equation}
	Follow the same argument as for $\Vert E_1-E_1^I\Vert_{\Omega_0}^2$, we have
	\begin{equation}\label{E1 3}
		\Vert E_1-E_1^I\Vert_{[x_{\frac{N}{2}},1]\times[0,1]}^2\le C\varepsilon^{2\sigma}.
	\end{equation}
	
	$\Vert (E_1-E_1^I)_x\Vert$ can be decomposed as
	\begin{equation*}
		\Vert (E_1-E_1^I)_x\Vert^2=\Vert (E_1-E_1^I)_x\Vert_{\Omega_x\cup\Omega_{xy}}^2+\Vert (E_1-E_1^I)_x\Vert_{\Omega_s\cup\Omega_y}^2.
	\end{equation*}
	Similar to \eqref{E1 1}, one has
	\begin{equation}
		\Vert (E_1-E_1^I)_x\Vert_{\Omega_x\cup\Omega_{xy}}^2\le C\varepsilon^{-1}N^{-2}.
	\end{equation}
	Triangular inequality, inverse inequality, Assumption \ref{bound}, Lemma \ref{mesh step} and \eqref{stability of Lagrange interpolation} yield
	\begin{equation*}
		\Vert (E_1-E_1^I)_x\Vert_{\Omega_s\cup\Omega_y}^2\le C(\Vert (E_1)_x\Vert_{\Omega_s\cup\Omega_y}^2+\Vert(E_1^I)_x\Vert_{\Omega_0}^2+\Vert(E_1^I)_x\Vert_{[x_{\frac{N}{2}},1]\times[0,1]}^2),
	\end{equation*}
	where
	\begin{align*}
		&\Vert (E_1)_x\Vert_{\Omega_s\cup\Omega_y}^2\le C\varepsilon^{-1}N^{-2\sigma},\\
		&\Vert(E_1^I)_x\Vert_{\Omega_0}^2\le Ch_{x,\frac{N}{2}-1}^{-2}N^{-2\sigma}h_{x,\frac{N}{2}-1}\le C\varepsilon^{-1}N^{-2\sigma},\\
		&\Vert(E_1^I)_x\Vert_{[x_{\frac{N}{2}},1]\times[0,1]}^2\le C\varepsilon^{2\sigma-2}.
	\end{align*}
	Therefore,
	\begin{equation}
		\Vert (E_1-E_1^I)_x\Vert_{\Omega_s\cup\Omega_y}^2\le C\varepsilon^{-1}N^{-2\sigma}.
	\end{equation}
	
	The argument for $\Vert (E_1-E_1^I)_y\Vert$ is similar to $\Vert (E_1-E_1^I)_x\Vert$, hence
	\begin{align}
		&\Vert (E_1-E_1^I)_y\Vert_{\Omega_x\cup\Omega_{xy}}^2\le C\varepsilon N^{-2}.\\
		&\Vert (E_1-E_1^I)_y\Vert_{\Omega_s\cup\Omega_y}^2\le CN^{-2\sigma}.
	\end{align}
	
	(b) Consider $\Vert E_2-E_2^I\Vert_\varepsilon$ and $\Vert E_{12}-E_{12}^I\Vert_\varepsilon$. Their proofs are analogous to $\Vert E_1-E_1^I\Vert_\varepsilon$, therefore are omitted. Here we just give the main conclusions that will be used in our later analysis.
	\begin{align}
		&\Vert E_2-E_2^I\Vert_{\Omega_y\cup\Omega_{xy}}^2\le C\varepsilon^{\frac{1}{2}}N^{-4},\\
		&\Vert E_2-E_2^I\Vert_{\Omega_x\cup\Omega_s}^2\le CN^{-2\sigma},\\
		&\Vert (E_2-E_2^I)_x\Vert_{\Omega_y\cup\Omega_{xy}}^2\le C\varepsilon^{\frac{1}{2}}N^{-2},\\
		&\Vert (E_2-E_2^I)_x\Vert_{\Omega_x\cup\Omega_s}^2\le CN^{-2\sigma}\label{E2 4},\\
		&\Vert (E_2-E_2^I)_y\Vert_{\Omega_y\cup\Omega_{xy}}^2\le C\varepsilon^{-\frac{1}{2}}N^{-2},\\
		&\Vert (E_2-E_2^I)_y\Vert_{\Omega_x\cup\Omega_s}^2\le C\varepsilon^{-\frac{1}{2}}N^{-2\sigma},
	\end{align}
	and
	\begin{align}
		&\Vert E_{12}-E_{12}^I\Vert_{\Omega_{xy}}^2\le C\varepsilon^{\frac{3}{2}}N^{-4},\label{E12 1}\\
		&\Vert E_{12}-E_{12}^I\Vert_{\Omega\textbackslash\Omega_{xy}}^2\le CN^{-2\sigma},\\
		&\Vert (E_{12}-E_{12}^I)_x\Vert_{\Omega_{xy}}^2\le C\varepsilon^{-\frac{1}{2}}N^{-2},\\
		&\Vert (E_{12}-E_{12}^I)_x\Vert_{\Omega_x\cup\Omega_y}^2\le C\varepsilon^{-1}N^{-2\sigma},\\
		&\Vert (E_{12}-E_{12}^I)_x\Vert_{\Omega_s}^2\le C\varepsilon^{-1}N^{-4\sigma},\\
		&\Vert (E_{12}-E_{12}^I)_y\Vert_{\Omega_{xy}}^2\le C\varepsilon^{\frac{1}{2}}N^{-2},\\
		&\Vert (E_{12}-E_{12}^I)_y\Vert_{\Omega_x\cup\Omega_y}^2\le CN^{-2\sigma},\\
		&\Vert (E_{12}-E_{12}^I)_y\Vert_{\Omega_s}^2\le C\varepsilon^{-1}N^{-4\sigma}.
	\end{align}
The above inequalities give the estimates for \eqref{error 1} and \eqref{error 2}.

(c) Consider $\Vert\mathcal{Q}E_1\Vert_\varepsilon$ and $\Vert\mathcal{B}E_1\Vert_\varepsilon$. \eqref{QE1}, \eqref{BE1}, Lemma \ref{mesh step} and inverse inequality generate
	\begin{equation}
		\begin{aligned}
			\Vert\mathcal{Q}E_1\Vert_\varepsilon^2&\le CN^{-2\sigma}\sum_{j=0}^{N}\Vert\theta_{\frac{N}{2}-1,j}\Vert_\varepsilon^2\\
			&\le CN^{-2\sigma}\sum_{j=0}^{N}(\varepsilon h_{x,\frac{N}{2}-1}^{-2} h_{x,\frac{N}{2}-1}h_{y,j}+\varepsilon h_{y,j}^{-2} h_{x,\frac{N}{2}-1}h_{y,j}+h_{x,\frac{N}{2}-1}h_{y,j})\\
			&\le CN^{-2\sigma},
		\end{aligned}
	\end{equation}
	and similarly,
	\begin{equation}
		\Vert\mathcal{B}E_1\Vert_\varepsilon^2\le C\varepsilon^{\frac{1}{2}}N^{-2\sigma}.
	\end{equation}
	Thus we have done.
\end{proof}

\section{Supercloseness property on a Bakhvalov-type mesh}\label{sec. 4}
In this section, supercloseness $\Vert u^N-\varPi u\Vert_\varepsilon$ will be analyzed in detail. \eqref{variation} gives
\begin{equation*}
C\Vert v\Vert_{\varepsilon}^{2}\le a(v,v)=a(u-\varPi u,v)= \varepsilon(\nabla(u-\varPi u),\nabla v)+(-b(u-\varPi u)_{x},v)+(c(u-\varPi u),v).
\end{equation*}

\begin{lemma}\label{all term}
Let $u$ be the solution to \eqref{model problem} satisfying Assumptions \ref{bound} and \ref{transposition}, $\varPi u$ be the new interpolation of $u$. For any $v\in V^{N}$, one has
\begin{equation*}
\vert\varepsilon (\nabla(u-\varPi u),\nabla v)+(c(u-\varPi u),v)\vert \le CN^{-2}\Vert v\Vert_{\varepsilon}.
\end{equation*}
\begin{proof}
\begin{align*}
\varepsilon (\nabla(u-\varPi u),\nabla v)=&\varepsilon(\nabla(E_1+E_2+E_{12}-\pi E_1-E_2^I-E_{12}^I),\nabla v)+\varepsilon(\nabla(S-\Pi S),\nabla v).
\end{align*}

(a) Consider $\varepsilon(\nabla(E_1+E_2+E_{12}-\pi E_1-E_2^I-E_{12}^I),\nabla v)$. The following estimates are straightward:
\begin{align}
&|\varepsilon\int_{\Omega_y\cup\Omega_s}(E_1-\pi E_1)_x v_x+\varepsilon\int_\Omega(E_1-\pi E_1)_y v_y|\le CN^{-\sigma}\Vert v\Vert_\varepsilon,\label{diffusion 1}\\
&|\varepsilon\int_\Omega(E_2-E_2^I)_xv_x+\varepsilon\int_{\Omega_x\cup\Omega_s}(E_2-E_2^I)_yv_y|\le C\varepsilon^{\frac{1}{4}}N^{-\sigma}\Vert v\Vert_\varepsilon,\\
&|\varepsilon\int_{\Omega\textbackslash\Omega_{xy}}(E_{12}-E_{12}^I)_xv_x+\varepsilon\int_{\Omega\textbackslash\Omega_{xy}}(E_{12}-E_{12}^I)_yv_y|\le CN^{-\sigma}\Vert v\Vert_\varepsilon\label{diffusion 3},
\end{align}
where we have used some conclusions in Lemma \ref{error of E}.

To analyze the remaining terms, we introduce two integral identities from \cite{Lin1Yan2:1996-Construction} that have been detailed demonstrated in \cite[Th 4.3.]{Zhang1:2003-Finite}:
\begin{subequations}\label{integral identity}
\begin{align}
		&\int_K\frac{\partial}{\partial x}(w-w^I)\frac{\partial v}{\partial x}=\int_K\frac{\partial^3 w}{\partial x\partial y^2}F(y)(\frac{\partial v}{\partial x}-\frac{2}{3}(y-y_K)\frac{\partial^2 v}{\partial x\partial y}),\label{integral identity 1}\\
	&\int_K\frac{\partial}{\partial y}(w-w^I)\frac{\partial v}{\partial y}=\int_K\frac{\partial^3 w}{\partial x^2\partial y}E(x)(\frac{\partial v}{\partial y}-\frac{2}{3}(x-x_K)\frac{\partial^2 v}{\partial x\partial y}),\label{integral identity 2}
\end{align}
\end{subequations}
where
\begin{align*}
	&F(y)=\frac{(y-y_K)^2-\hbar_{K}^2}{2},\\
	&E(x)=\frac{(x-x_K)^{2}-h_{K}^2}{2},
\end{align*}
$K\in \mathcal{T}_N$, $h_K$ denotes half the length of $K$ in the $x$ direction, $\hbar_K$ denotes half the width of $K$ in the y direction, $(x_K,y_K)$ denotes the center of $K$, and $K$ can be denoted as $(x_K-h_K,x_K+h_K)\times(y_K-\hbar_K,y_K+\hbar_K)$. 

For term $\varepsilon\int_{\Omega_x\cup\Omega_{xy}}(E_1-\pi E_1)_xv_x$, we first split it into
\begin{equation*}
\varepsilon\int_{\Omega_x\cup\Omega_{xy}}(E_1-\pi E_1)_xv_x=\varepsilon\int_{\Omega_x\cup\Omega_{xy}}(E_1- E_1^I)_xv_x	+\varepsilon\int_{\Omega_x\cup\Omega_{xy}}(E_1^I-\pi E_1)_xv_x.
\end{equation*}
Use \eqref{integral identity 1} to derive
\begin{equation*}
|\varepsilon\int_{K\subset\Omega_x\cup\Omega_{xy}}( E_{1}-E_1^I)_{x}v_{x}|\le C\int_K e^{-\frac{\beta x}{\varepsilon}}|F(y)||\frac{\partial v}{\partial x}|\le CN^{-2}\Vert e^{-\frac{\beta x}{\varepsilon}}\Vert_K\Vert\frac{\partial v}{\partial x}\Vert_K,
\end{equation*}
summing over $K\subset\Omega_x\cup\Omega_{xy}$ and applying Cauchy-Schwarz inequality, we get
\begin{equation}\label{diffusion 4}
|\varepsilon\int_{\Omega_x\cup\Omega_{xy}}( E_{1}- E_1^I)_{x}v_{x}|\le CN^{-2}\Vert v\Vert_\varepsilon.
\end{equation}
\eqref{interpolation E}-\eqref{BE1} combined with inverse inequality yield
\begin{equation}\label{diffusion 5}
	\begin{aligned}
|\varepsilon\int_{\Omega_x\cup\Omega_{xy}}(E_1^I-\pi E_1)_xv_x|&=|\varepsilon\int_{\Omega_x\cup\Omega_{xy}}\sum_{j=1}^{N-1}(E_1(x_{\frac{N}{2}-1},y_j)\theta_{\frac{N}{2}-1,j})_xv_x|\\
&\le C\varepsilon N^{-\sigma}\sum_{j=1}^{N-1}\Vert(\theta_{\frac{N}{2}-1,j})_x\Vert_{\Omega_x\cup\Omega_{xy}}\Vert v_x\Vert_{\Omega_x\cup\Omega_{xy}}\\
&\le C\varepsilon N^{-\sigma}\sum_{j=0}^{N-1}h_{x,\frac{N}{2}-2}^{-1}h_{x,\frac{N}{2}-2}^{\frac{1}{2}}h_{y,j}^{\frac{1}{2}}\varepsilon^{-\frac{1}{2}}\Vert v\Vert_{\Omega_x\cup\Omega_{xy}}\\
&\le CN^{-\sigma+\frac{1}{2}}\Vert v\Vert_\varepsilon.
\end{aligned}
\end{equation}
Similar to $\varepsilon\int_{\Omega_x\cup\Omega_{xy}}(E_1-E_1^I)_xv_x$, except that \eqref{integral identity 2} is used, we can obtain
\begin{equation}\label{diffusion 6}
|\varepsilon\int_{\Omega_{y}\cup\Omega_{xy}}(E_{2}-E_{2}^I)_{y}v_{y}|\le C\varepsilon^{\frac{1}{4}}N^{-2}\Vert v\Vert_\varepsilon.
\end{equation}
Lemma \ref{mesh step} combined with \eqref{integral identity 1} generates
\begin{align*}
|\varepsilon\int_{K\subset\Omega_{xy}}(E_{12}-E_{12}^I)_{x}v_{x}|&\le C\varepsilon^{-1}\int_K\left[e^{-\frac{\beta x}{\varepsilon}}(e^{-\frac{ y}{\sqrt\varepsilon}}+e^{-\frac{1- y}{\sqrt\varepsilon}})\right]|F(y)||\frac{\partial v}{\partial x}|\\
&\le C\varepsilon^{-1}\left[h_{y,j}^2\max(e^{-\frac{ y}{\sqrt\varepsilon}}+e^{-\frac{1- y}{\sqrt\varepsilon}})\right]\Vert e^{-\frac{\beta x}{\varepsilon}}\Vert_K\Vert\frac{\partial v}{\partial x}\Vert_K\\
&\le C\varepsilon^{-1}(\sqrt\varepsilon N^{-1})^2\Vert e^{-\frac{\beta x}{\varepsilon}}\Vert_K\Vert\frac{\partial v}{\partial x}\Vert_K,
\end{align*}
then summing up:
\begin{equation}\label{diffusion 7}
|\varepsilon\int_{\Omega_{xy}}(E_{12}^{I}-E_{12})_{x}v_{x}|\le CN^{-2}\Vert v\Vert_\varepsilon.
\end{equation}
Similar to $\varepsilon\int_{K\subset\Omega_{xy}}(E_{12}-E_{12}^I)_{x}v_{x}$, just substituting \eqref{integral identity 1} to \eqref{integral identity 2}, we have
\begin{equation}\label{diffusion 8}
|\varepsilon\int_{\Omega_{xy}}(E_{12}^{I}-E_{12})_{y}v_{y}|\le C\varepsilon^{\frac{1}{4}}N^{-2}\Vert v\Vert_\varepsilon.
\end{equation}                                                 
Collect \eqref{diffusion 1}-\eqref{diffusion 3} and \eqref{diffusion 4}-\eqref{diffusion 8}, one has
\begin{equation}\label{4.10}
|\varepsilon(\nabla(E_1+E_2+E_{12}-\pi E_1-E_2^I-E_{12}^I),\nabla v)|\le CN^{-2}\Vert v\Vert_\varepsilon.
\end{equation}

(b) Consider $\varepsilon(\nabla(S-\Pi S),\nabla v)$. It can be split into
\begin{align*}
	&\varepsilon(\nabla(S-\Pi S),\nabla v)\\
	=&\varepsilon\int_{\Omega\textbackslash([x_{\frac{N}{2}-2},x_{\frac{N}{2}}]\times[y_{\frac{N}{4}},y_{\frac{3N}{4}}])}(\nabla(S-S^I),\nabla v)+\varepsilon\int_{[x_{\frac{N}{2}-2},x_{\frac{N}{2}}]\times[y_{\frac{N}{4}},y_{\frac{3N}{4}}]}(\nabla(S-\Pi S),\nabla v).
\end{align*}
 Use the two integral identities \eqref{integral identity 1} and \eqref{integral identity 2} to obtain
	\begin{equation*}
		|\varepsilon\int_{K}(\nabla(S-S^I),\nabla v)|\le C\varepsilon N^{-3}\Vert\nabla v\Vert_K,
	\end{equation*}
summing over $K\subset\Omega\textbackslash([x_{\frac{N}{2}-2},x_{\frac{N}{2}}]\times[y_{\frac{N}{4}},y_{\frac{3N}{4}}])$ and taking use of Cauchy-Schwarz inequality, one has
\begin{equation}\label{4.11}
|\varepsilon\int_{\Omega\textbackslash([x_{\frac{N}{2}-2},x_{\frac{N}{2}}]\times[y_{\frac{N}{4}},y_{\frac{3N}{4}}])}(\nabla(S-S^I),\nabla v)|\le C\varepsilon^{\frac{1}{2}}N^{-2}\Vert v\Vert_\varepsilon.
\end{equation}
Let $S-\Pi S=S-S^I+S^I-\Pi S$, 
\begin{equation}
		|\varepsilon\int_{[x_{\frac{N}{2}-2},x_{\frac{N}{2}}]\times[y_{\frac{N}{4}},y_{\frac{3N}{4}}]}(\nabla(S- S^I),\nabla v)|\le C\varepsilon^{\frac{1}{2}}N^{-\frac{3}{2}}\Vert v\Vert_\varepsilon.
\end{equation}
is straightforward with Lemma \ref{interpolation error}. Apply \eqref{max-norm 2}, inverse inequality and Cauchy-Schwarz inequality, we have
\begin{equation}\label{4.13}
\begin{aligned}
	&|\varepsilon\int_{[x_{\frac{N}{2}-2},x_{\frac{N}{2}}]\times[y_{\frac{N}{4}},y_{\frac{3N}{4}}]}(\nabla(S^I- \Pi S),\nabla v)|\\
\le& C\varepsilon\sum_{i=\frac{N}{2}-2}^{\frac{N}{2}-1}\sum_{j=\frac{N}{4}}^{\frac{3N}{4}-1}h_{x,i}^{-1}\Vert S^I-\Pi S\Vert_{L^\infty(K_{i,j})}(h_{x,i}h_{y,j})^{\frac{1}{2}}\Vert\nabla v\Vert_{K_{i,j}}\\
\le& CN^{-2}\Vert v\Vert_\varepsilon.
\end{aligned}
\end{equation}
Collect \eqref{4.11}-\eqref{4.13}, one can obtain
\begin{equation}\label{4.14}
|\varepsilon(\nabla(S-\Pi S),\nabla v)|\le CN^{-2}\Vert v\Vert_\varepsilon.
\end{equation}

(c) Consider $(c(u-\varPi u),v)$. Recall Lemma \ref{max-norm error } and Lemma \ref{error of E}, we have
\begin{equation}\label{4.15}
|(c(u-\varPi u),v)|\le CN^{-2}\Vert v\Vert_\varepsilon.
\end{equation}

Then, with \eqref{4.10}, \eqref{4.14} and \eqref{4.15}, we establish the assertion for Lemma \ref{all term}.
\end{proof}
\end{lemma}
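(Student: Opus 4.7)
The plan is to decompose $u-\varPi u$ via the Shishkin decomposition $u=S+E_1+E_2+E_{12}$ and $\varPi u = \Pi S + \pi E_1 + E_2^I + E_{12}^I$, and to estimate the diffusion contribution $\varepsilon(\nabla(u-\varPi u),\nabla v)$ separately for the layer components and the smooth component, and then to dispatch $(c(u-\varPi u),v)$ by a direct $L^\infty\!\!-\!L^1$ bound. First I would write
\begin{equation*}
\varepsilon(\nabla(u-\varPi u),\nabla v) = \varepsilon\bigl(\nabla(E_1+E_2+E_{12}-\pi E_1-E_2^I-E_{12}^I),\nabla v\bigr) + \varepsilon(\nabla(S-\Pi S),\nabla v),
\end{equation*}
and handle the layer block first. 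On the coarse subdomains $\Omega_y\cup\Omega_s$ (for $E_1$), $\Omega_x\cup\Omega_s$ (for $E_2$) and $\Omega\setminus\Omega_{xy}$ (for $E_{12}$), the size of the layer itself already gives $N^{-\sigma}$ through Lemma \ref{error of E}, which is dominated by $N^{-2}$ since $\sigma=5/2$.

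On the layer subdomains, the suboptimal factors introduced by the Lagrange interpolation would have to be recovered by switching from standard $L^2$ bounds to the integral identities \eqref{integral identity 1}-\eqref{integral identity 2}; these yield an extra $N^{-1}$ on each element (since $F(y)=\mathcal{O}(h_{y,j}^2)$ and $E(x)=\mathcal{O}(h_{x,i}^2)$) together with the pointwise layer bounds from Assumption \ref{bound}. The only non-standard piece here is $\pi E_1 = E_1^I-\mathcal{Q}E_1+\mathcal{B}E_1$, where the correction term $E_1^I-\pi E_1$ must be bounded in $\Omega_x\cup\Omega_{xy}$ by an inverse-inequality argument (as in \eqref{diffusion 5}); the resulting $N^{-\sigma+1/2}$ is still better than $N^{-2}$.

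For the smooth part the key observation is that on $\Omega\setminus([x_{\frac{N}{2}-2},x_{\frac{N}{2}}]\times[y_{\frac{N}{4}},y_{\frac{3N}{4}}])$ one has $\Pi S = S^I$, so the integral identities immediately give $\varepsilon N^{-3}$ per element and thus $\varepsilon^{1/2}N^{-2}$ overall. On the critical strip $[x_{\frac{N}{2}-2},x_{\frac{N}{2}}]\times[y_{\frac{N}{4}},y_{\frac{3N}{4}}]$ I would insert $S^I$ and split $S-\Pi S=(S-S^I)+(S^I-\Pi S)$; the first summand is controlled by Lemma \ref{interpolation error} (using that the strip contains only two columns of elements), while the second summand is exactly where the new interpolation pays off: the nodal bound \eqref{dot error} together with an inverse inequality and the narrow widths of the two relevant columns yield $N^{-2}\|v\|_\varepsilon$, as in \eqref{4.13}. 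This is the step I expect to be the main obstacle, because it requires absorbing the factor $\varepsilon h_{x,i}^{-1}$ via the narrowness of $h_{x,\frac{N}{2}-2}$ and $h_{x,\frac{N}{2}-1}$ provided by Lemma \ref{mesh step}.

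Finally, for $(c(u-\varPi u),v)$ I would bound $\|u-\varPi u\|_{L^\infty}$ by combining Lemma \ref{max-norm error } with Lemma \ref{error of E}, noting that $\|E_i-E_i^I\|+\|\mathcal{Q}E_1\|+\|\mathcal{B}E_1\|\le CN^{-2}$ in $L^2$ after using $N^{-\sigma}\le N^{-2}$, then apply Cauchy–Schwarz to obtain $CN^{-2}\|v\|\le CN^{-2}\|v\|_\varepsilon$. Gathering \eqref{4.10}, \eqref{4.14} and \eqref{4.15} completes the argument.
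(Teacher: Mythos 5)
Your proposal follows essentially the same line of argument as the paper's proof: the same decomposition into layer and smooth contributions, the same use of the integral identities \eqref{integral identity 1}--\eqref{integral identity 2} on the layer subdomains and outside the critical strip, the same splitting $S-\Pi S=(S-S^I)+(S^I-\Pi S)$ on $[x_{\frac{N}{2}-2},x_{\frac{N}{2}}]\times[y_{\frac{N}{4}},y_{\frac{3N}{4}}]$ combined with the nodal bound \eqref{dot error} and an inverse inequality, and the same $L^\infty$/$L^2$ treatment of $(c(u-\varPi u),v)$ via Lemma \ref{max-norm error } and Lemma \ref{error of E}. One small clarification on phrasing: in \eqref{4.13} what you actually need is that $\varepsilon h_{x,i}^{-1}\le C$ for $i\in\{\frac{N}{2}-2,\frac{N}{2}-1\}$, i.e.\ a \emph{lower} bound $h_{x,i}\gtrsim\varepsilon$ (from Lemma \ref{mesh step} and the mesh definition), rather than the narrowness per se of those columns.
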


\begin{lemma}\label{term E}
	Let Assumptions \ref{bound} and \ref{transposition} hold, $\pi E_1$ be the interpolation of $E_1$ defined in \eqref{interpolation E}, $E_2^I$ and $E_{12}^I$ be the standard Lagrange interpolations of $E_2$ and $E_{12}$, respectively. For any $v\in V^{N}$, one has
	\begin{equation*}
		|(-b(E_1+E_2+E_{12}-\pi E_1-E_2^I-E_{12}^I)_x,v)|\le C\varepsilon^{\frac{1}{4}}N^{-\frac{3}{2}}\Vert v\Vert_\varepsilon+CN^{-2}\Vert v\Vert_\varepsilon.
	\end{equation*}
\begin{proof}
Green's formula generates
\begin{align*}
	&(-b(E_1+E_2+E_{12}-\pi E_1-E_2^I-E_{12}^I)_x,v)\\
	=&\int_{\Omega}b(E_{1}-\pi E_1)v_{x}+\int_{\Omega}b(E_{12}-E_{12}^{I})v_{x}+\int_{\Omega}b_{x}\left[(E_{1}-\pi E_{1})+(E_{12}-E_{12}^{I})\right]v-\int_{\Omega}b(E_{2}-E_{2}^{I})_{x}v.
\end{align*}

(a) Consider $\int_{\Omega}b(E_{1}-\pi E_1)v_{x}$. It can be decomposed as
\begin{align*}
	\int_{\Omega}b(E_{1}-\pi E_1)v_{x}=\int_{\Omega\textbackslash\Omega_0}b(E_1-E_1^I)v_x+\int_{\Omega\textbackslash\Omega_0}b\mathcal{Q}E_1v_x-\int_\Omega b\mathcal{B}E_1v_x+\int_{\Omega_0}b\left[E_1-(E_1^I-\mathcal{Q}E_1)\right]v_x.
\end{align*}	
Use H\"{o}lder inequality, \eqref{E1 1} and \eqref{E1 3} to obtain
\begin{equation}\label{4.16}
|\int_{\Omega\textbackslash\Omega_0}b(E_1-E_1^I)v_x|\le C(N^{-2}+C\varepsilon^{\sigma-\frac{1}{2}})\Vert v\Vert_\varepsilon.
\end{equation}
From \eqref{QE1}, we know that $\mathcal{Q}E_1|_{\Omega\textbackslash\Omega_0}=\sum_{j=0}^N E_1(x_{\frac{N}{2}-1},y_j)\theta_{\frac{N}{2}-1,j}$, then
\begin{equation}
\begin{aligned}
	|\int_{\Omega\textbackslash\Omega_0}b\mathcal{Q}E_1v_x|&\le C\Vert \mathcal{Q}E_1\Vert_{\Omega\textbackslash\Omega_0}\Vert v_x\Vert_{\Omega\textbackslash\Omega_0}\\
	&\le CN^{-\sigma}\sum_{j=0}^N \Vert\theta_{\frac{N}{2}-1,j}\Vert_{\Omega\textbackslash\Omega_0}\Vert v_x\Vert_{\Omega\textbackslash\Omega_0}\\
	&\le CN^{-\sigma}\sum_{j=0}^{N-1} (h_{x,\frac{N}{2}-2}h_{y,j})^{\frac{1}{2}}\Vert v_x\Vert_{\Omega\textbackslash\Omega_0}\\
	& \le CN^{-\sigma+\frac{1}{2}}\Vert v\Vert_\varepsilon.
\end{aligned}
\end{equation}	
By \eqref{BE1}, one has
\begin{equation}
\begin{aligned}
	|-\int_\Omega b\mathcal{B}E_1v_x|&\le CN^{-\sigma}\sum_{j=0,N}\Vert\theta_{\frac{N}{2}-1,j}\Vert_\Omega\Vert v_x\Vert_\Omega\\
	&\le CN^{-\sigma}\sum_{j=0,N-1}(h_{x,\frac{N}{2}-1}h_{y,j})^{\frac{1}{2}}\varepsilon^{-\frac{1}{2}}\Vert v\Vert_\varepsilon\\
	&\le CN^{-\sigma}(\varepsilon^{\frac{1}{2}}N^{-\frac{1}{2}}\sqrt\varepsilon)^{\frac{1}{2}}\varepsilon^{-\frac{1}{2}}\Vert v\Vert_\varepsilon\\
	&\le CN^{-\sigma-\frac{1}{4}}\Vert v\Vert_\varepsilon,
\end{aligned}
\end{equation}
here we have used $h_{x,\frac{N}{2}-1}\le C\varepsilon^{\frac{1}{2}}N^{-\frac{1}{2}}$ from Lemma \ref{special step}.

For $\int_{\Omega_0}b\left[E_1-(E_1^I-\mathcal{Q}E_1)\right]v_x$, according to \eqref{QE1} and triangle inequality, we can see that
\begin{equation*}
	\Vert E_1-(E_1^I-\mathcal{Q}E_1)\Vert_{\Omega_0}^2\le \Vert E_1\Vert_{\Omega_0}^2+\Vert E_1^I-\mathcal{Q}E_1\Vert_{\Omega_0}^2,\quad E_1^I-\mathcal{Q}E_1|_{\Omega_0}=\sum_{j=0}^N E_1(x_{\frac{N}{2}},y_j)\theta_{\frac{N}{2},j},
\end{equation*}
where
\begin{align*}
	&\Vert E_1\Vert_{\Omega_0}^2\le C\varepsilon N^{-2\sigma},\\
	&\Vert E_1^I-\mathcal{Q}E_1\Vert_{\Omega_0}^2\le C\varepsilon^{2\sigma}\sum_{j=0}^N\Vert \theta_{\frac{N}{2},j}\Vert_{\Omega_0}^2\le C\varepsilon^{2\sigma}\sum_{j=0}^{N-1}(h_{x,\frac{N}{2}-1}h_{y,j})\le C\varepsilon^{2\sigma}N^{-1}.
\end{align*}
Thus,
\begin{equation}\label{4.19}
	|\int_{\Omega_0}b\left[E_1-(E_1^I-\mathcal{Q}E_1)\right]v_x|\le CN^{-\sigma}\Vert v\Vert_\varepsilon.
\end{equation}	
Collect \eqref{4.16}-\eqref{4.19}, we get
\begin{equation}\label{4.20}
|\int_{\Omega}b(E_{1}-\pi E_1)v_{x}|\le CN^{-2}\Vert v\Vert_\varepsilon.
\end{equation}

(b) Consider $\int_{\Omega}b(E_{12}-E_{12}^{I})v_{x}$. It will be separated into the six cases $\Omega_{xy}$, $\Omega_x$, $[x_{\frac{N}{2}-1},x_{\frac{N}{2}}]\times([0,y_{\frac{N}{4}-1}]\cup[y_{\frac{3N}{4}+1},1])$, $[x_{\frac{N}{2}-1},x_{\frac{N}{2}}]\times([y_{\frac{N}{4}-1},y_{\frac{N}{4}}]\cup[y_{\frac{3N}{4}},y_{\frac{3N}{4}+1}])$, $[x_{\frac{N}{2}-1},x_{\frac{N}{2}}]\times[y_{\frac{N}{4}},y_{\frac{3N}{4}}]$, and $[x_{\frac{N}{2}},1]\times[0,1]$.
\begin{equation*}
|\int_{\Omega_{xy}}b(E_{12}-E_{12}^{I})v_{x}|\le C\varepsilon^{\frac{1}{4}}N^{-2}\Vert v\Vert_\varepsilon,
\end{equation*}
can be deduced by \eqref{E12 1}. Furthermore, use H\"{o}lder inequality, Lemma \ref{mesh step}, Lemma \ref{special step} and \eqref{stability of Lagrange interpolation} to derive
\begin{align*}
	&|\int_{\Omega_{x}}b(E_{12}-E_{12}^{I})v_{x}|\le C\Vert E_{12}\Vert_{L^\infty(\Omega_x)}(\text{meas}\Omega_x)^{\frac{1}{2}}\Vert v_x\Vert_{\Omega_x}\le CN^{-\sigma}\ln^{\frac{1}{2}}N\Vert v\Vert_\varepsilon,\\
	&|\int_{[x_{\frac{N}{2}-1},x_{\frac{N}{2}}]\times([0,y_{\frac{N}{4}-1}]\cup[y_{\frac{3N}{4}+1},1])}b(E_{12}-E_{12}^{I})v_{x}|\le CN^{-\sigma}(\varepsilon^{\frac{3}{4}}N^{-\frac{1}{4}}\sqrt\varepsilon\ln{N})^{\frac{1}{2}}\Vert v_x\Vert_\varepsilon\le C\varepsilon^{\frac{1}{8}}N^{-\sigma-\frac{1}{8}}\ln^{\frac{1}{2}}{N}\Vert v\Vert_\varepsilon,\\
	&|\int_{[x_{\frac{N}{2}-1},x_{\frac{N}{2}}]\times([y_{\frac{N}{4}-1},y_{\frac{N}{4}}]\cup[y_{\frac{3N}{4}},y_{\frac{3N}{4}+1}])}b(E_{12}-E_{12}^{I})v_{x}|\le CN^{-\sigma}(\varepsilon^{\frac{3}{4}}N^{-\frac{1}{4}}(\sqrt\varepsilon)^{\frac{1}{2}}N^{-\frac{1}{2}})^{\frac{1}{2}}\Vert v_x\Vert_\varepsilon\le CN^{-\sigma-\frac{3}{8}}\Vert v\Vert_\varepsilon,\\
	&|\int_{[x_{\frac{N}{2}-1},x_{\frac{N}{2}}]\times[y_{\frac{N}{4}},y_{\frac{3N}{4}}]}b(E_{12}-E_{12}^{I})v_{x}|\le C\varepsilon^{\sigma-\frac{1}{2}}N^{-\frac{1}{2}-\sigma}\Vert v\Vert_\varepsilon,\\
	&|\int_{[x_{\frac{N}{2}},1]\times[0,1]}b(E_{12}-E_{12}^{I})v_{x}|\le C\varepsilon^{\sigma-\frac{1}{2}}\Vert v\Vert_\varepsilon,
\end{align*}
where $h_{x,\frac{N}{2}-1}\le C\varepsilon^{\frac{3}{4}}N^{-\frac{1}{4}}$ and $h_{y,\frac{N}{4}-1}=h_{y,\frac{3N}{4}}\le C(\sqrt\varepsilon)^{\frac{1}{2}}N^{-\frac{1}{2}}$ are used and they can be deduced by Lemma \ref{special step}.

In conclusion,
\begin{equation}
	|\int_{\Omega}b(E_{12}-E_{12}^{I})v_{x}|\le CN^{-\sigma}\ln^{\frac{1}{2}}N\Vert v\Vert_\varepsilon+C\varepsilon^{\frac{1}{4}}N^{-2}\Vert v\Vert_\varepsilon.
\end{equation}

(c) Consider $\int_{\Omega}b_{x}\left[(E_{1}-\pi E_{1})+(E_{12}-E_{12}^{I})\right]v$. Lemma \ref{error of E} generates
\begin{equation}
	|\int_{\Omega}b_{x}\left[(E_{1}-\pi E_{1})+(E_{12}-E_{12}^{I})\right]v|\le CN^{-\sigma}\Vert v\Vert_\varepsilon.
\end{equation}

(d)Consider $\int_{\Omega}b(E_{2}-E_{2}^{I})_{x}v$. We separate the discussion into the cases of $\Omega_x\cup\Omega_s$ and $\Omega_{xy}\cup\Omega_y$. Recall \eqref{E2 4}, one has
\begin{align*}
	|\int_{\Omega_{x}\cup\Omega_s}b(E_{2}-E_{2}^{I})_{x}v|\le CN^{-\sigma}\Vert v\Vert_\varepsilon.
\end{align*}
Green's formula generates
\begin{align*}
	\int_{\Omega_{xy}\cup\Omega_y}b(E_{2}-E_{2}^{I})_{x}v&=-\int_{\Omega_{xy}\cup\Omega_y}(b_xv+bv_x)(E_2-E_2^I).
\end{align*}
Then, we split $\Omega_{xy}\cup\Omega_y$ into $\Omega_{xy}$, $[x_{\frac{N}{2}-1},x_{\frac{N}{2}}]\times([0,y_{\frac{N}{4}-1}]\cup[y_{\frac{3N}{4}+1},1])$ and $[x_{\frac{N}{2}},1]\times([0,y_{\frac{N}{4}-1}]\cup[y_{\frac{3N}{4}+1},1])$. In a similar way to \eqref{E1 1}, one can obtain
\begin{equation}
|-\int_{\Omega_{xy}}(b_xv+bv_x)(E_2-E_2^I)|\le C\varepsilon^{\frac{1}{4}}N^{-\frac{3}{2}}\Vert v\Vert_\varepsilon.
\end{equation}

Assumption \ref{bound}, Lemma \ref{mesh step}, Lemma \ref{special step}, H\"{o}lder inequality, triangle inequality and \eqref{stability of Lagrange interpolation} generate
\begin{align*}
	&|\int_{[x_{\frac{N}{2}-1},x_{\frac{N}{2}}]\times([0,y_{\frac{N}{4}-1}]\cup[y_{\frac{3N}{4}+1},1])}(b_xv+bv_x)(E_2-E_2^I)|\le CN^{-\sigma-\frac{1}{4}}\ln^{\frac{1}{2}}N\Vert v\Vert_\varepsilon,\\
	&|\int_{[x_{\frac{N}{2}},1]\times([0,y_{\frac{N}{4}-1}]\cup[y_{\frac{3N}{4}+1},1])}(b_xv+bv_x)(E_2-E_2^I)|\le C\varepsilon^{\sigma-\frac{1}{4}}\ln^{\frac{1}{2}}N\Vert v\Vert_\varepsilon.
\end{align*}
In conclusion,
\begin{equation}\label{4.23}
	|\int_{\Omega}b(E_{2}-E_{2}^{I})_{x}v|\le C\varepsilon^{\frac{1}{4}}N^{-\frac{3}{2}}\Vert v\Vert_\varepsilon.
\end{equation}

From \eqref{4.20}-\eqref{4.23}, we give the proof of Lemma \ref{term E}.
\end{proof}
\end{lemma}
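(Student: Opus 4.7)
The plan is to apply Green's formula (using $v|_{\partial\Omega}=0$) to the two ``stiff'' contributions $(-b(E_1-\pi E_1)_x,v)$ and $(-b(E_{12}-E_{12}^I)_x,v)$, moving the $x$-derivative off the layer terms and producing
\begin{equation*}
-(b(E_1+E_2+E_{12}-\pi E_1-E_2^I-E_{12}^I)_x,v)=\int_\Omega b(E_1-\pi E_1)v_x+\int_\Omega b(E_{12}-E_{12}^I)v_x+\int_\Omega b_x[(E_1-\pi E_1)+(E_{12}-E_{12}^I)]v-\int_\Omega b(E_2-E_2^I)_x v.
\end{equation*}
The $E_2-E_2^I$ term retains its $x$-derivative because, by Assumption \ref{bound}, $\partial_x E_2$ carries no $\varepsilon^{-1}$ blow-up, so there is no gain from shifting.

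For each of the four resulting integrals I would partition $\Omega$ into the subdomains $\Omega_{xy},\Omega_x,\Omega_y,\Omega_s$ plus the transition strip $\Omega_0$, and estimate region by region using H\"{o}lder's inequality, Lemma \ref{interpolation error}, Lemma \ref{error of E}, Lemma \ref{mesh step}, Lemma \ref{special step}, and the Lagrange stability bound \eqref{stability of Lagrange interpolation}. Outside $\Omega_0$ the exponential factor in $E_1$ delivers $N^{-\sigma}$. On $\Omega_{xy}\cup\Omega_y$ the parabolic layer $E_2$ enjoys an $L^2$ interpolation bound which, combined with $\varepsilon^{1/2}\Vert v_x\Vert\le\Vert v\Vert_\varepsilon$, yields the target $\varepsilon^{1/4}N^{-3/2}$. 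The corner layer $E_{12}-E_{12}^I$ is handled by a further six-way partition: on $\Omega_{xy}$ the bound \eqref{E12 1} gives $\varepsilon^{1/4}N^{-2}\Vert v\Vert_\varepsilon$, while elsewhere the pointwise estimate $|E_{12}|\le CN^{-\sigma}$ combined with $h_{x,\frac{N}{2}-1}\le C\varepsilon^{3/4}N^{-1/4}$ and $h_{y,\frac{N}{4}-1}\le C\varepsilon^{1/4}N^{-1/2}$ from Lemma \ref{special step} yields the required decay. For the $E_2$-term I would split further: on $\Omega_x\cup\Omega_s$ the $L^2$ bound \eqref{E2 4} applies directly, and on $\Omega_y\cup\Omega_{xy}$ a second Green's formula shifts the $x$-derivative off $E_2-E_2^I$ onto $bv$.

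The hard part will be the strip $\Omega_0$ in the integral $\int_{\Omega_0}b(E_1-\pi E_1)v_x$. Here $E_1$ is not exponentially small (only $|E_1(x_{\frac{N}{2}-1},\cdot)|\le CN^{-\sigma}$), and a naive H\"{o}lder estimate loses a factor of $h_{x,\frac{N}{2}-1}^{1/2}$ that cannot be absorbed. The construction $\pi E_1=E_1^I-\mathcal{Q}E_1+\mathcal{B}E_1$ is tailored precisely for this: on $\Omega_0$ the combination $E_1^I-\mathcal{Q}E_1$ collapses to $\sum_j E_1(x_{\frac{N}{2}},y_j)\theta_{\frac{N}{2},j}$, whose nodal coefficients satisfy $|E_1(x_{\frac{N}{2}},\cdot)|\le C\varepsilon^\sigma$ by Lemma \ref{mesh step}. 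Writing $E_1-\pi E_1=[E_1-(E_1^I-\mathcal{Q}E_1)]-\mathcal{B}E_1$ on $\Omega_0$, bounding each summand pointwise by $\varepsilon^\sigma$ or $N^{-\sigma}$, and applying H\"{o}lder against $v_x$ with the strip width controlled by Lemma \ref{special step}, gives an $O(N^{-\sigma})$ contribution. Gathering all subestimates and using $\sigma=5/2$ (so $N^{-\sigma}\ll N^{-2}$) yields the stated bound $C\varepsilon^{1/4}N^{-3/2}\Vert v\Vert_\varepsilon+CN^{-2}\Vert v\Vert_\varepsilon$.
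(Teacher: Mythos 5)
Your proposal is correct and follows essentially the same route as the paper: Green's formula to move the $x$-derivative off $E_1-\pi E_1$ and $E_{12}-E_{12}^I$, domain decomposition into $\Omega_{xy},\Omega_x,\Omega_y,\Omega_s$ and the transition strip $\Omega_0$, the key observation that $E_1^I-\mathcal{Q}E_1$ collapses on $\Omega_0$ to a sum with nodal values $|E_1(x_{N/2},\cdot)|\le C\varepsilon^\sigma$, a finer partition with $\alpha$-tuned bounds from Lemma \ref{special step} for the corner-layer term, and a second Green's formula for $\int b(E_2-E_2^I)_x v$ on $\Omega_{xy}\cup\Omega_y$. The paper carries out the same decomposition and term-by-term estimates in full detail, but no genuinely different idea is used on either side.
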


\begin{lemma}\label{term S}
	Let Assumptions \ref{bound} and \ref{transposition} hold, $\Pi S$ be the new interpolation of $S$. For any $v\in V^{N}$, one has
	\begin{equation*}
		|(-b(S-\Pi S)_x,v)|\le C\varepsilon^{\frac{1}{4}}N^{-\frac{3}{2}}\ln^{\frac{1}{2}}{N}\Vert v\Vert_\varepsilon+CN^{-2}\ln^{\frac{1}{2}}N\Vert v\Vert_\varepsilon.
	\end{equation*}
\end{lemma}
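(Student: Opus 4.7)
The plan is to decompose $(-b(S-\Pi S)_x,v)$ according to the three-piece definition of $\Pi S$ in \eqref{new}, namely $\Omega=\Omega_1\cup\Omega_2\cup\Omega_3$ where $\Omega_2:=[x_{\frac{N}{2}-1},x_{\frac{N}{2}}]\times[y_{\frac{N}{4}},y_{\frac{3N}{4}}]$ (on which $\Pi S=\mathcal{P}S$), $\Omega_3:=[x_{\frac{N}{2}-2},x_{\frac{N}{2}-1}]\times[y_{\frac{N}{4}},y_{\frac{3N}{4}}]$ (on which $\Pi S=\mathcal{L}S$), and $\Omega_1$ is the remainder (where $\Pi S=S^I$). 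On $\Omega_1$ I would apply Green's formula to rewrite the integrand as $b(S-S^I)v_x+b_x(S-S^I)v$ and then employ Lemma \ref{interpolation error}, Assumption \ref{bound} and the mesh-size bounds of Lemmas \ref{mesh step}--\ref{special step}. The analysis mirrors the $E_{12}$ and $E_2$ arguments of Lemma \ref{term E}: the $\ln^{1/2}N$ factor arises from the $O(\varepsilon\ln N)$ and $O(\sqrt{\varepsilon}\ln N)$ widths of the layer subregions (cf.\ $x_{\frac{N}{2}-1}\le C\sigma\varepsilon\ln N$), while the $\varepsilon^{1/4}N^{-3/2}$ contribution comes from the cells adjacent to the parabolic-layer boundary whose $y$-extent satisfies $h_{y,\frac{N}{4}-1}\le C\varepsilon^{1/4}N^{-1/2}$ and pairs against $\Vert v_x\Vert\le\varepsilon^{-1/2}\Vert v\Vert_\varepsilon$.

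The delicate step is on the critical strip $\Omega_2$---the source of the intractable terms alluded to in the Remark following \eqref{dot error}. Here I would expand $v=\sum v(x_i,y_j)\theta_{i,j}$ restricted to $\Omega_2$ and isolate, for each interior node $(x_{\frac{N}{2}},y_j)$ with $j=\frac{N}{4}+1,\dots,\frac{3N}{4}-1$, the contribution tested against $\theta_{\frac{N}{2},j}$. Freezing the coefficient $b$ at $(x_{\frac{N}{2}},y_j)$ reduces this contribution to
\begin{equation*}
-b(x_{\frac{N}{2}},y_j)v(x_{\frac{N}{2}},y_j)\Bigl[\int_{K_{\frac{N}{2}-1,j-1}}(S-\mathcal{P}S)_x\theta_{\frac{N}{2},j}+\int_{K_{\frac{N}{2}-1,j}}(S-\mathcal{P}S)_x\theta_{\frac{N}{2},j}\Bigr]
\end{equation*}
plus a Lipschitz remainder of size $O(H+h)\Vert S-\mathcal{P}S\Vert_{L^\infty(\Omega_2)}$. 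By the defining identity \eqref{new interpolation} the bracket equals $\frac{H^2}{12}\int_{y_{j-1}}^{y_{j+1}}S_{xx}\theta_{\frac{N}{2},j}(x_{\frac{N}{2}},y)\mathrm{d}y$; since $|S_{xx}|\le C$ and $H\le 2N^{-1}$, each node contributes $O(H^2 h)$, and summing in $j$ combined with Cauchy--Schwarz and a trace inequality for $v$ along $x=x_{\frac{N}{2}}$ yields the required $O(N^{-2})\Vert v\Vert_\varepsilon$ bound. The Lipschitz remainder and the contributions from the nodes on $x=x_{\frac{N}{2}-1}$ are then absorbed via Lemma \ref{max-norm error }, \eqref{dot error} and \eqref{new interpolation 1}--\eqref{new interpolation 2}, which collectively guarantee $\Vert S-\mathcal{P}S\Vert_{L^\infty(\Omega_2)}\le CN^{-2}$.

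On the adjacent thin strip $\Omega_3$, $\mathcal{L}S$ is linear in $x$ between $S(x_{\frac{N}{2}-2},y_j)$ and $\mathcal{P}S(x_{\frac{N}{2}-1},y_j)$, so combining Lemma \ref{interpolation error} with \eqref{dot error} yields $\Vert S-\mathcal{L}S\Vert_{L^\infty(\Omega_3)}\le CN^{-2}$. Green's formula followed by H\"older's inequality bounds this piece by $C\Vert S-\mathcal{L}S\Vert_{L^\infty(\Omega_3)}(\mathrm{meas}\,\Omega_3)^{1/2}\Vert v_x\Vert$; since $\mathrm{meas}\,\Omega_3\le Ch_{x,\frac{N}{2}-2}\le C\sigma\varepsilon$ in the $x$-direction, the $\varepsilon^{1/2}$ gained from the measure cancels the $\varepsilon^{-1/2}$ absorbed in $\Vert v_x\Vert$, leaving an $O(N^{-2}\ln^{1/2}N)\Vert v\Vert_\varepsilon$ contribution, the logarithm coming from an extra $\ln N$ in the parabolic-layer-adjacent pieces.

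The main obstacle is the $\Omega_2$ step: the coefficient-freezing must be carried out so that neither the test function $v$ nor the Lipschitz correction absorbs an inverse power of $\varepsilon$ or $N$ that would destroy the $N^{-2}$ rate. The identity \eqref{new interpolation} is engineered precisely to cancel the $O(H^2)$ leading error that the standard Lagrange interpolation fails to control on $K_{\frac{N}{2}-1,\cdot}$, and confirming that each of the remaining pieces is genuinely $O(N^{-2})$ (or $O(\varepsilon^{1/4}N^{-3/2}\ln^{1/2}N)$ on the parabolic-layer-adjacent slabs) is the heart of the argument.
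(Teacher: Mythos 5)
Your decomposition by the three pieces of $\Pi S$ (so that $\Omega_1$ includes the whole coarse region $[x_{\frac{N}{2}},1]\times[0,1]$) is not what the paper does, and the difference is fatal at two points.

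First, on the coarse subregion $[x_{\frac{N}{2}},1]\times[0,1]\subset\Omega_1$, Green's formula followed by Cauchy--Schwarz gives only
\begin{equation*}
\left|\int b(S-S^I)v_x\right|\le C\Vert S-S^I\Vert\,\Vert v_x\Vert\le CN^{-2}\varepsilon^{-1/2}\Vert v\Vert_\varepsilon,
\end{equation*}
and there is no small measure (unlike on $[0,x_{\frac{N}{2}-1}]\times[0,1]$, which has $x$-width $O(\varepsilon\ln N)$) to absorb the $\varepsilon^{-1/2}$. Your appeal to the $E_2$, $E_{12}$ arguments of Lemma \ref{term E} does not transfer, because those layer components are themselves exponentially small on $\Omega_s$, whereas $S$ is $O(1)$ everywhere. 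The paper instead decomposes the coarse-region integral with the Lin--Yan superconvergence identity and the discrete $L^2$-projection $\Pi^N b$, so that consecutive boundary terms telescope; the residual (term \uppercase\expandafter{\romannumeral7}) involves $b^{K_{i,j}}-b^{K_{i+1,j}}=O(N^{-1})$ and is then $O(N^{-2})$.

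Second, and more importantly, your estimate of the critical strip $\Omega_2$ does not actually close at $O(N^{-2})$. After invoking \eqref{new interpolation}, each node yields a term of order $H^2 h\,|v(x_{\frac{N}{2}},y_j)|$. Using \eqref{frequent 2}, $|v(x_{\frac{N}{2}},y_j)|\le CN\Vert v\Vert_{K_{\frac{N}{2},j}}$, and summing over the $O(N)$ nodes by Cauchy--Schwarz gives
\begin{equation*}
\sum_j H^2 h\cdot N\Vert v\Vert_{K_{\frac{N}{2},j}}\le CN^{-2}\sum_j\Vert v\Vert_{K_{\frac{N}{2},j}}\le CN^{-2}\cdot N^{1/2}\Vert v\Vert= CN^{-3/2}\Vert v\Vert,
\end{equation*}
which is $O(N^{-3/2})$, not $O(N^{-2})$; a trace inequality along $x=x_{\frac{N}{2}}$ produces the same $N^{1/2}$ loss. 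This is precisely the ``intractable term'' the construction is designed to kill, but not by making it small on $\Omega_2$ alone. The point of \eqref{new interpolation} is that the $\Omega_2$ contribution (term \uppercase\expandafter{\romannumeral4}) is paired with the boundary term \uppercase\expandafter{\romannumeral9}, which arises at $x=x_{\frac{N}{2}}$ from the Lin--Yan identity on the coarse side; the identity \eqref{new interpolation} forces \uppercase\expandafter{\romannumeral4}+\uppercase\expandafter{\romannumeral9} to reduce to four isolated boundary-node terms (\eqref{term S4}) plus a remainder controlled by $b^{K_{\frac{N}{2},j-1}}-b^{K_{\frac{N}{2},j}}=O(N^{-1})$, all of order $O(N^{-2})$. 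By decomposing according to the pieces of $\Pi S$ and treating each in isolation, you have separated exactly the two terms that must be combined, and so the cancellation mechanism that motivates the new interpolation never comes into play in your argument.
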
	
\begin{proof}
	We decompose $(-b(S-\Pi S)_x,v)$ as follows:
	\begin{equation}\label{main decomposition}
		\begin{aligned}
			&(b(S-\Pi S)_x,v)\\
			=&\int_{[0,x_{\frac{N}{2}-1}]\times[0,1]}b(S-\Pi S)_x v+\int_{[x_{\frac{N}{2}-1},x_{\frac{N}{2}}]\times[0,1]}b(S-\Pi S)_x v+\int_{[x_{\frac{N}{2}},1]\times[0,1]}b(S-S^I)_x v.
		\end{aligned}
	\end{equation}
	
	Apply Green's formula to the first term on the right-hand side of \eqref{main decomposition}, we can get
	\begin{equation*}
		\int_{[0,x_{\frac{N}{2}-1}]\times[0,1]}b(S-\Pi S)_x v=-\int_{[0,x_{\frac{N}{2}-1}]\times[0,1]}(b_x v+bv_x)(S-\Pi S)+\int_{0}^{1}b(S-\Pi S)v(x_{\frac{N}{2}-1},y)\mathrm{d}y.
	\end{equation*}
	Lemma \ref{max-norm error } and Cauchy-Schwarz inequality yield
	\begin{align*}
		|-\int_{[0,x_{\frac{N}{2}-1}]\times[0,1]}(b_x v+bv_x)(S-\Pi S)|&\le C\sum_{i=0}^{\frac{N}{2}-2}\sum_{j=0}^{N-1}\Vert S-\Pi S\Vert_{K_{i,j}}\Vert v_x\Vert_{K_{i,j}}\\
		&\le C\varepsilon^{-\frac{1}{2}}N^{-2}(\sum_{i=0}^{\frac{N}{2}-2}\sum_{j=0}^{N-1}h_{x,i}h_{y,j})^{\frac{1}{2}}(\sum_{i=0}^{\frac{N}{2}-2}\sum_{j=0}^{N-1}\Vert v\Vert_{K_{i,j}})^{\frac{1}{2}}\\
		&\le CN^{-2}\ln^{\frac{1}{2}}{N}\Vert v\Vert_\varepsilon,
	\end{align*}
	and
	\begin{align*}
		|\int_{0}^{1}b(S-\Pi S)v(x_{\frac{N}{2}-1},y)\mathrm{d}y|&\le CN^{-2}\sum_{j=0}^{N-1}\int_{y_j}^{y_{j+1}}(\int_{0}^{x_{\frac{N}{2}-1}}v_x(x,y)\mathrm{d}x)\mathrm{d}y\\
		&\le CN^{-2}\sum_{j=0}^{N-1}\sum_{i=0}^{\frac{N}{2}-2}\Vert v_x\Vert_{L^1(K_{i,j})}\\
		&\le CN^{-2}\ln^{\frac{1}{2}}{N}\Vert v\Vert_\varepsilon.
	\end{align*}
	
	Before the estimate of the remained two terms of \eqref{main decomposition}, we first define $\Pi^N b$, the discrete $L_2-$projection of $b$, as
	\begin{equation*}
		b^{K_{i,j}}=\Pi^N b|_{K_{i,j}}=\frac{1}{h_{x,i}h_{y,j}}\int_{K_{i,j}}b.
	\end{equation*}
	See that $\Pi^N b$ is a piecewise constant vector function. It is a standard result that
	\begin{equation}\label{projection}
			\quad\Vert b-\Pi^N b\Vert_{\infty}\le CN^{-1}\vert b\vert_{1,\infty}.
	\end{equation}

	Furthermore, we will frequently make use of the following estimates in the subsequent analysis. For any $v\in V^N$ and $j=\frac{N}{4},\dots,\frac{3N}{4}-1$:
	\begin{align}
		&|v(x_{\frac{N}{2}-1},y_j)|\le Ch^{-1}\int_{y_j}^{y_{j+1}}(\int_{0}^{x_{\frac{N}{2}-1}}v_x(x,y)\mathrm{d}x)\mathrm{d}y\le Ch^{-\frac{1}{2}}\ln^{\frac{1}{2}}N\Vert v\Vert_{[0,x_{\frac{N}{2}-1}]\times[y_j,y_{j+1}]}\label{frequent 1},\\
		&|v(x_{\frac{N}{2}},y_j)|\le C\Vert v\Vert_{L^\infty(K_{\frac{N}{2},j})}\le C(h_{x,\frac{N}{2}}h_{y,j})^{-\frac{1}{2}}\Vert v\Vert_{K_{\frac{N}{2},j}}\le CN\Vert v\Vert_{K_{\frac{N}{2},j}}\label{frequent 2},
	\end{align}
\begin{equation}\label{inverse}
\begin{aligned}
	\Vert (S-\mathcal{P} S)_x\Vert_{L^\infty(K_{\frac{N}{2}-1,j})}&\le\Vert(S-S^I)_x\Vert_{L^\infty(K_{\frac{N}{2}-1,j})}+\Vert(S^I-\mathcal{P} S)_x\Vert_{L^\infty(K_{\frac{N}{2}-1,j})}\\
	&\le CN^{-1}+Ch_{x,\frac{N}{2}-1}^{-1}\Vert S^I-\mathcal{P} S\Vert_{L^\infty(K_{\frac{N}{2}-1,j})}\\
	&\le Ch_{x,\frac{N}{2}-1}^{-1}N^{-2}.
\end{aligned}
\end{equation}

	Now, we decompose the second term of \eqref{main decomposition} as 
	\begin{equation}\label{main 1}
		\begin{aligned}
			&\int_{[x_{\frac{N}{2}-1},x_{\frac{N}{2}}]\times[0,1]}b(S-\Pi S)_x v\\
			=&\int_{[x_{\frac{N}{2}-1},x_{\frac{N}{2}}]\times([0,y_{\frac{N}{4}}]\cup[y_{\frac{3N}{4}},1])}b(S-S^I)_xv+\int_{[x_{\frac{N}{2}-1},x_{\frac{N}{2}}]\times[y_{\frac{N}{4}},y_{\frac{3N}{4}}]}b(S-\mathcal{P}S)_x v\\
			=&\uppercase\expandafter{\romannumeral1}+\uppercase\expandafter{\romannumeral2}+\uppercase\expandafter{\romannumeral3}+\uppercase\expandafter{\romannumeral4},
		\end{aligned}
	\end{equation}
where
\begin{align}
	&\uppercase\expandafter{\romannumeral1}=:\int_{[x_{\frac{N}{2}-1},x_{\frac{N}{2}}]\times([0,y_{\frac{N}{4}}]\cup[y_{\frac{3N}{4}},1])}b(S-S^I)_xv,\\
	&\uppercase\expandafter{\romannumeral2}=:\sum_{j=\frac{N}{4}}^{\frac{3N}{4}-1}\int_{K_{\frac{N}{2}-1,j}}b^{K_{\frac{N}{2}-1,j}}(S-\mathcal{P}S)_x\left[v(x_{\frac{N}{2}-1},y_j)\theta_{\frac{N}{2}-1,j}+v(x_{\frac{N}{2}-1},y_{j+1})\theta_{\frac{N}{2}-1,j+1}\right],\\
	&\uppercase\expandafter{\romannumeral3}=:\sum_{j=\frac{N}{4}}^{\frac{3N}{4}-1}\int_{K_{\frac{N}{2}-1,j}}(b^{K_{\frac{N}{2}-1,j}}-b^{K_{\frac{N}{2},j}})(S-\mathcal{P}S)_x\left[v(x_{\frac{N}{2}},y_j)\theta_{\frac{N}{2},j}+v(x_{\frac{N}{2}},y_{j+1})\theta_{\frac{N}{2},j+1}\right],\\
	&\uppercase\expandafter{\romannumeral4}=:\sum_{j=\frac{N}{4}}^{\frac{3N}{4}-1}\int_{K_{\frac{N}{2}-1,j}}b^{K_{\frac{N}{2},j}}(S-\mathcal{P}S)_x\left[v(x_{\frac{N}{2}},y_j)\theta_{\frac{N}{2},j}+v(x_{\frac{N}{2}},y_{j+1})\theta_{\frac{N}{2},j+1}\right]\label{tough 1}.
\end{align}
Take use of the integral identity from \cite{Lin1Yan2:1996-Construction}: Let $K\in \mathcal{T}_{N}$ and suppose that $K$ is $K_{i,j}$, $(x_K, y_K)$ is the center of $K$, then
\begin{equation}
\int_{K}\frac{\partial}{\partial x}(S-S^I)v=\int_{K} R(S,v)+\frac{h_{x,i}^2}{12}\left[\int_{y_j}^{y_{j+1}}\frac{\partial^2 S}{\partial x^2}v(x_{i+1},y)\mathrm{d}y-\int_{y_j}^{y_{j+1}}\frac{\partial^2 S}{\partial x^2}v(x_{i},y)\mathrm{d}y\right],
\end{equation}
where
\begin{equation*}
\begin{aligned}
	R(S,v)=&\frac{1}{3}E(x)(x-x_K)\frac{\partial^3 S}{\partial x^3}\frac{\partial v}{\partial x}-\frac{h_{x,i}^2}{12}\frac{\partial^3 S}{\partial x^3}v+F(y)\frac{\partial^3 S}{\partial x\partial y^2}\\
	&\cdot\left[ v-(x-x_K)\frac{\partial v}{\partial x}-\frac{2}{3}(y-y_K)\frac{\partial v}{\partial y}+\frac{2}{3}(x-x_K)(y-y_K)\frac{\partial^2 v}{\partial x\partial y}\right],
\end{aligned}
\end{equation*}
and refer to \cite[(4.24)-(4.31)]{Zhang1:2003-Finite} for detailed information, the last term of \eqref{main decomposition} can be decomposed as
	\begin{equation}\label{main 2}
		\begin{aligned}
			&\int_{[x_{\frac{N}{2}},1]\times[0,1]}b(S-S^I)_x v\\
			=&\int_{[x_{\frac{N}{2}},1]\times[0,1]}(b-\Pi^N b)(S-S^I)_x v+\int_{[x_{\frac{N}{2}},1]\times[0,1]}\Pi^N b(S-S^I)_x v\\
			=&\uppercase\expandafter{\romannumeral5}+\uppercase\expandafter{\romannumeral6}+\uppercase\expandafter{\romannumeral7}+\uppercase\expandafter{\romannumeral8}+\uppercase\expandafter{\romannumeral9},
		\end{aligned}
	\end{equation}
	where
	\begin{align}
		&\uppercase\expandafter{\romannumeral5}=:\int_{[x_{\frac{N}{2}},1]\times[0,1]}(b-\Pi^N b)(S-S^I)_x v,\\
		&\uppercase\expandafter{\romannumeral6}=:\int_{[x_{\frac{N}{2}},1]\times[0,1]}\Pi^N b R(S,v),\\
		&\uppercase\expandafter{\romannumeral7}=:\sum_{i=\frac{N}{2}}^{N-2}\sum_{j=0}^{N-1}\frac{H^2}{12}(b^{K_{i,j}}-b^{K_{i+1,j}})\int_{y_j}^{y_{j+1}}\frac{\partial^2 S}{\partial x^2}v(x_{i+1},y)\mathrm{d}y,\\
		&\uppercase\expandafter{\romannumeral8}=:\sum_{j=0}^{\frac{N}{4}-1}-b^{K_{\frac{N}{2},j}}\frac{H^2}{12}\int_{y_j}^{y_{j+1}}\frac{\partial^2 S}{\partial x^2}v(x_{\frac{N}{2}},y)\mathrm{d}y+\sum_{j=\frac{3N}{4}}^{N-1}-b^{K_{\frac{N}{2},j}}\frac{H^2}{12}\int_{y_j}^{y_{j+1}}\frac{\partial^2 S}{\partial x^2}v(x_{\frac{N}{2}},y)\mathrm{d}y,\\
		&\uppercase\expandafter{\romannumeral9}=:\sum_{j=\frac{N}{4}}^{\frac{3N}{4}-1}-b^{K_{\frac{N}{2},j}}\frac{H^2}{12}\int_{y_j}^{y_{j+1}}\frac{\partial^2 S}{\partial x^2}v(x_{\frac{N}{2}},y)\mathrm{d}y\label{tough 2}.
	\end{align}
	
	Then we give the estimate of $\uppercase\expandafter{\romannumeral1}-\uppercase\expandafter{\romannumeral9}$. Recall Lemma \ref{interpolation error} and triangle inequality, one has
	\begin{equation}\label{term S1}
		\begin{aligned}
			|\uppercase\expandafter{\romannumeral1}|&\le |\int_{[x_{\frac{N}{2}-1},x_{\frac{N}{2}}]\times([0,y_{\frac{N}{4}-1}]\cup[y_{\frac{3N}{4}+1},1])}b(S-S^I)_xv|+|\int_{[x_{\frac{N}{2}-1},x_{\frac{N}{2}}]\times([y_{\frac{N}{4}-1},y_{\frac{N}{4}}]\cup[y_{\frac{3N}{4}},y_{\frac{3N}{4}+1}])}b(S-S^I)_xv|\\
			&\le CN^{-1}\left[(h_{x,\frac{N}{2}-1}\sqrt\varepsilon\ln{N})^{\frac{1}{2}}+(h_{x,\frac{N}{2}-1}h_{y,\frac{N}{4}-1})^{\frac{1}{2}}\right]\Vert v\Vert_\varepsilon\\
			&\le C\varepsilon^{\frac{1}{4}}N^{-\frac{3}{2}}\Vert v\Vert_\varepsilon.
		\end{aligned}
	\end{equation}
	Apply \eqref{frequent 1}, \eqref{inverse} and Cauchy-Schwarz inequality to obtain
	\begin{equation}\label{term S2}
		\begin{aligned}
			|\uppercase\expandafter{\romannumeral2}|&\le CN^{-2}\sum_{j=\frac{N}{4}}^{\frac{3N}{4}-1}h_{x,\frac{N}{2}-1}^{-1}h^{-\frac{1}{2}}\ln^{\frac{1}{2}}{N}\Vert v\Vert_{[0,x_{\frac{N}{2}-1}]\times[y_j,y_{j+1}]}h_{x,\frac{N}{2}-1}h\\
			&\le CN^{-2}\ln^{\frac{1}{2}}{N}(\sum_{j=\frac{N}{4}}^{\frac{3N}{4}-1}h)^{\frac{1}{2}}(\sum_{j=\frac{N}{4}}^{\frac{3N}{4}-1}\Vert v\Vert_{[0,x_{\frac{N}{2}-1}]\times[y_j,y_{j+1}]}^2)^{\frac{1}{2}}\\
			&\le CN^{-2}\ln^{\frac{1}{2}}{N}\Vert v\Vert_\varepsilon.
		\end{aligned}
	\end{equation}
	Estimate of \uppercase\expandafter{\romannumeral3} can be easily obtained by \eqref{projection}, \eqref{frequent 2}, \eqref{inverse} and Cauchy-Schwarz inequality.
	\begin{equation}\label{term S3}
		\begin{aligned}
			|\uppercase\expandafter{\romannumeral3}|&\le C\sum_{j=\frac{N}{4}}^{\frac{3N}{4}-1}N^{-3}h_{x,\frac{N}{2}-1}^{-1}N\Vert v\Vert_{K_{\frac{N}{2},j}}h_{x,\frac{N}{2}-1}h\\
			&\le CN^{-\frac{5}{2}}\Vert v\Vert_\varepsilon,
		\end{aligned}
	\end{equation}
	Identity \eqref{new interpolation} gives rise to
	\begin{equation}\label{term S4}
		\uppercase\expandafter{\romannumeral4}+\uppercase\expandafter{\romannumeral9}=\expandafter{\romannumeral1}+\expandafter{\romannumeral2}+\expandafter{\romannumeral3}+\expandafter{\romannumeral4},
	\end{equation}
	where
	\begin{align*}
		&\expandafter{\romannumeral1}=:-b^{K_{\frac{N}{2},\frac{N}{4}}}h_{x,\frac{N}{2}}^2\int_{y_{\frac{N}{4}}}^{y_{\frac{N}{4}+1}}\frac{\partial^2 S}{\partial x^2}v(x_{\frac{N}{2}},y_{\frac{N}{4}})\theta_{\frac{N}{2},\frac{N}{4}}(x_{\frac{N}{2}},y)\mathrm{d}y,\\
		&\expandafter{\romannumeral2}=:-b^{K_{\frac{N}{2},\frac{3N}{4}-1}}h_{x,\frac{N}{2}}^2\int_{y_{\frac{3N}{4}-1}}^{y_{\frac{3N}{4}}}\frac{\partial^2 S}{\partial x^2}v(x_{\frac{N}{2}},y_{\frac{3N}{4}})\theta_{\frac{N}{2},\frac{3N}{4}}(x_{\frac{N}{2}},y)\mathrm{d}y,\\
		&\expandafter{\romannumeral3}=:b^{K_{\frac{N}{2},\frac{N}{4}}}\int_{K_{\frac{N}{2}-1,\frac{N}{4}}}(S-\mathcal{P}S)_x v(x_{\frac{N}{2}},y_{\frac{N}{4}})\theta_{\frac{N}{2},\frac{N}{4}},\\
		&\expandafter{\romannumeral4}=:b^{K_{\frac{N}{2},\frac{3N}{4}-1}}\int_{K_{\frac{N}{2}-1,\frac{3N}{4}-1}}(S-\mathcal{P}S)_x v(x_{\frac{N}{2}},y_{\frac{3N}{4}})\theta_{\frac{N}{2},\frac{3N}{4}}.
	\end{align*}
	We have, from the standard approximation theory and \eqref{frequent 2}
	\begin{equation}\label{term S4.1}
		|\expandafter{\romannumeral1}+\expandafter{\romannumeral2}|\le CN^{-2}\Vert v\Vert_\varepsilon,
	\end{equation}
	and from \eqref{frequent 2} as well as \eqref{inverse}
	\begin{equation}\label{term S4.2}
		|\expandafter{\romannumeral3}+\expandafter{\romannumeral4}|\le CN^{-2}\Vert v\Vert_\varepsilon,
	\end{equation}
Recall \eqref{projection} and Lemma \ref{interpolation error}, one obtains
	\begin{equation}\label{term S5}
		|\uppercase\expandafter{\romannumeral5}|\le CN^{-2}\Vert v\Vert_\varepsilon.
	\end{equation}
 For \uppercase\expandafter{\romannumeral6}, inverse inequality and Hölder inequality yield 
	\begin{equation*}
	|\int_{K\subset[x_{\frac{N}{2}},1]\times[0,1]}\Pi^N b R(S,v)|\le CN^{-1}\Vert R(S,v)\Vert_K\le CN^{-3}\Vert v\Vert_K
	\end{equation*}
    Summing over $K\subset[x_{\frac{N}{2}},1]\times[0,1]$ and applying Cauchy-Schwarz inequality, we get
    \begin{equation}\label{term S6}
    |\uppercase\expandafter{\romannumeral6}|\le CN^{-3}\sum_{i=\frac{N}{2}}^{N-1}\sum_{j=0}^{N-1}\Vert v\Vert_{K_{i,j}} \le CN^{-2}\Vert v\Vert_\varepsilon.
    \end{equation}
	Trace inequality and Cauchy-Schwarz inequality yield
	\begin{equation}\label{term S7}
		\begin{aligned}
			|\uppercase\expandafter{\romannumeral7}|&\le CN^{-3}\sum_{i=\frac{N}{2}}^{N-2}\sum_{j=0}^{N-1}H^{-1}\Vert v\Vert_{L^1(K_{i,j})}\\
			&\le CN^{-3}\sum_{i=\frac{N}{2}}^{N-2}\sum_{j=0}^{N-1}H^{-\frac{1}{2}}h^{\frac{1}{2}}\Vert v\Vert_{K_{i,j}}\\
			&\le CN^{-2}\Vert v\Vert_\varepsilon.
		\end{aligned}
	\end{equation}
	For \uppercase\expandafter{\romannumeral8}, it is suffice to discuss the first term on the right-hand side, since the situation on the second term is the same as the first term. Triangle inequality, Trace inequality and Cauchy-Shwarz inequality give us
	\begin{equation}\label{term S8}
		\begin{aligned}
			&|\sum_{j=0}^{\frac{N}{4}-1}-b^{K_{\frac{N}{2},j}}\frac{H^2}{12}\int_{y_j}^{y_{j+1}}\frac{\partial^2 S}{\partial x^2}v(x_{\frac{N}{2}},y)\mathrm{d}y|\\
			\le&|\sum_{j=0}^{\frac{N}{4}-2}-b^{K_{\frac{N}{2},j}}\frac{H^2}{12}\int_{y_j}^{y_{j+1}}\frac{\partial^2 S}{\partial x^2}v(x_{\frac{N}{2}},y)\mathrm{d}y|+|-b^{K_{\frac{N}{2},\frac{N}{4}-1}}\frac{H^2}{12}\int_{y_{\frac{N}{4}-1}}^{y_{\frac{N}{4}}}\frac{\partial^2 S}{\partial x^2}v(x_{\frac{N}{2}},y)\mathrm{d}y|\\
			\le&CN^{-2}	\sum_{j=0}^{\frac{N}{4}-2}H^{-1}\Vert v\Vert_{L^1(K_{\frac{N}{2},j})}+CN^{-2}H^{-1}\Vert v\Vert_{L^1(K_{\frac{N}{2},\frac{N}{4}-1})}\\
			\le&CN^{-\frac{3}{2}}(\sum_{j=0}^{\frac{N}{4}-2}h_{y,j})^{\frac{1}{2}} (\sum_{j=0}^{\frac{N}{4}-2}\Vert v\Vert_{K_{i,j}}^2)^{\frac{1}{2}}+CN^{-2}\Vert v\Vert_{K_{\frac{N}{2},\frac{N}{4}-1}}\\
			\le&C\varepsilon^{\frac{1}{4}}N^{-\frac{3}{2}}\ln^{\frac{1}{2}}{N}\Vert v\Vert_\varepsilon.
		\end{aligned}
	\end{equation}

	Collect \eqref{term S1}-\eqref{term S8}, we are done.
\end{proof}	

Now we give the main theorem.
\begin{theorem}\label{main theorem}
Under the conditions that both Assumptions \ref{bound} and \ref{transposition} are true. Suppose that $u$ is the exact solution to \eqref{model problem}, $u^{I}$ is the standard Lagrange interpolation of $u$, and $u^{N}$ is the corresponding finite element solution. Then, one has
	\begin{equation*}
		\Vert u^{I}-u^{N}\Vert_{\varepsilon}\le C\varepsilon^{\frac{1}{4}}N^{-\frac{3}{2}}\ln^{\frac{1}{2}}{N}+CN^{-2}\ln^{\frac{1}{2}}N.
	\end{equation*}
\begin{proof}
Triangle inequality yields
\begin{equation}
	\Vert u^{I}-u^{N}\Vert_{\varepsilon}\le \Vert u^{I}-\varPi u\Vert_{\varepsilon}+\Vert \varPi u-u^{N}\Vert_{\varepsilon}.
\end{equation}

Collecting Lemma \ref{all term}, Lemma \ref{term E} and Lemma \ref{term S}, we have
\begin{equation}\label{T1}
\Vert\varPi u-u^N\Vert_{\varepsilon}\le C\varepsilon^{\frac{1}{4}}N^{-\frac{3}{2}}\ln^{\frac{1}{2}}{N}+CN^{-2}\ln^{\frac{1}{2}}N.
\end{equation}

Then, for $ \Vert u^{I}-\varPi u\Vert_{\varepsilon}$,
\begin{equation*}
\Vert u^{I}-\varPi u\Vert_{\varepsilon}\le \Vert E_1^I-\pi E_1\Vert_\varepsilon+\Vert S^I-\Pi S\Vert_\varepsilon.
\end{equation*}
Lemma \ref{error of E} yields
\begin{equation}\label{T2}
\Vert E_1^I-\pi E_1\Vert_\varepsilon=\Vert\mathcal{Q}E_1-\mathcal{B}E_1\Vert_\varepsilon\le C(1+\varepsilon^{\frac{1}{4}}N^{\frac{1}{2}})N^{-\sigma}.
\end{equation}
\eqref{dot error} and inverse inequality yield
\begin{equation}\label{T3}
\begin{aligned}
\Vert S^I-\Pi S\Vert_\varepsilon&=\Vert\sum_{j=\frac{N}{4}+1}^{\frac{3N}{4}-1}(S^I-\mathcal{P}S)(x_{\frac{N}{2}-1},y_j)\theta_{\frac{N}{2}-1,j}\Vert_\varepsilon\\
&\le CN^{-2}\sum_{j=\frac{N}{4}+1}^{\frac{3N}{4}-1}\Vert \theta_{\frac{N}{2}-1,j}\Vert_\varepsilon\\
&\le CN^{-2}\sum_{j=\frac{N}{4}+1}^{\frac{3N}{4}-1}(h_{x,\frac{N}{2}-1}^{-1}h_{x,\frac{N}{2}-1}h+h^{-1}h_{x,\frac{N}{2}-1}h+h_{x,\frac{N}{2}-1}h)\\
&\le CN^{-2}.
\end{aligned}
\end{equation}

Collecting \eqref{T1}, \eqref{T2} and \eqref{T3}, we are done.
\end{proof}
\end{theorem}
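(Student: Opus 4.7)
The plan is to bound $\Vert u^I - u^N\Vert_\varepsilon$ via the triangle inequality by splitting it into $\Vert u^I - \varPi u\Vert_\varepsilon + \Vert\varPi u - u^N\Vert_\varepsilon$, where $\varPi u$ is the composite interpolation defined in \eqref{interpolation}. The first term compares two interpolations of $u$ and is controlled directly by the construction; the second term is the ``supercloseness'' quantity, which is attacked by the standard variational route: from the coercivity \eqref{coercivity} and Galerkin orthogonality we have the identity \eqref{variation}, so it suffices to bound the three bilinear form contributions that arise there.

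For $\Vert\varPi u - u^N\Vert_\varepsilon$, I would set $v = u^N - \varPi u \in V^N$ in \eqref{variation} and split the right-hand side into the diffusion piece $\varepsilon(\nabla(u-\varPi u),\nabla v)$, the convection piece $(-b(u-\varPi u)_x,v)$, and the reaction piece $(c(u-\varPi u),v)$. The diffusion and reaction contributions, together with the convection contribution coming from the layer components $E_1,E_2,E_{12}$, are already handled: Lemma \ref{all term} takes care of the diffusion plus reaction in one shot, while Lemma \ref{term E} covers the convection contribution from $E_1+E_2+E_{12}-\pi E_1 - E_2^I - E_{12}^I$. The remaining convection contribution from the smooth part $S-\Pi S$ is the most delicate one (since it is where the new interpolation $\Pi S$ is used to cancel the intractable terms in the subregion near the transition line $x=x_{N/2}$), and Lemma \ref{term S} supplies exactly the bound we need. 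Summing these three lemmas, dividing by $\Vert v\Vert_\varepsilon$, yields the claimed $C\varepsilon^{1/4}N^{-3/2}\ln^{1/2}N + CN^{-2}\ln^{1/2}N$ bound for $\Vert\varPi u - u^N\Vert_\varepsilon$.

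For $\Vert u^I - \varPi u\Vert_\varepsilon$, the decomposition \eqref{interpolation} and the definitions of $E_2^I,E_{12}^I$ as standard Lagrange interpolations force everything to reduce to $\Vert E_1^I - \pi E_1\Vert_\varepsilon + \Vert S^I - \Pi S\Vert_\varepsilon$. The first piece is $\Vert\mathcal{Q}E_1 - \mathcal{B}E_1\Vert_\varepsilon$ by \eqref{interpolation E}, which is at most $C(1+\varepsilon^{1/4}N^{1/2})N^{-\sigma}$ by \eqref{error 3} and is absorbed into $N^{-2}$ since $\sigma = 5/2$. The second piece is supported only on $[x_{N/2-2},x_{N/2}]\times[y_{N/4},y_{3N/4}]$ and, by construction, can be written as a sum of nodal basis functions whose nodal values $\beta_j = (S^I - \mathcal{P}S)(x_{N/2-1},y_j)$ have been shown in \eqref{dot error} to be $O(N^{-2})$; combined with the inverse/scaling estimates $\Vert\theta_{N/2-1,j}\Vert_\varepsilon \le C$ on the relevant elements (using the mesh sizes from Lemma \ref{mesh step}), this yields a clean $CN^{-2}$ bound.

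The step that I expect to be the real obstacle is ensuring that the three contributions from Lemmas \ref{all term}, \ref{term E}, \ref{term S} fit together to give the advertised rate without any parasitic logarithms or $\varepsilon$ powers slipping through, and making certain that the nodal bound \eqref{dot error} is indeed strong enough to kill the $h_{x,N/2-1}$ smallness in $\Vert\theta_{N/2-1,j}\Vert_\varepsilon$; once these are checked, the assembly of \eqref{T1}, \eqref{T2}, \eqref{T3} is just a triangle inequality and the stated bound follows.
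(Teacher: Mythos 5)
Your proposal reproduces the paper's proof step for step: the triangle-inequality split, Lemmas \ref{all term}, \ref{term E}, \ref{term S} for the supercloseness piece $\Vert\varPi u-u^N\Vert_\varepsilon$, and \eqref{error 3} plus \eqref{dot error} for $\Vert u^I-\varPi u\Vert_\varepsilon$; it is the same route.

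One caveat on the last scaling, which you rightly flagged as the place to check: the bound $\Vert\theta_{\frac{N}{2}-1,j}\Vert_\varepsilon\le C$ combined with the plain triangle inequality over the $\frac{N}{2}-1$ indices $j$ gives only $CN^{-2}\cdot N\cdot 1=CN^{-1}$, and even the sharper estimate $\Vert\theta_{\frac{N}{2}-1,j}\Vert_\varepsilon^2\le C\big(\varepsilon h/h_{x,\frac{N}{2}-1}+\varepsilon h_{x,\frac{N}{2}-1}/h+h_{x,\frac{N}{2}-1}h\big)\le CN^{-1}$ (using Lemma \ref{mesh step} and $\varepsilon\le N^{-1}$) yields only $CN^{-3/2}$ under the triangle inequality. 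To actually recover the stated $CN^{-2}$ one must use the finite overlap of the supports of the $\theta_{\frac{N}{2}-1,j}$ (only consecutive $j$'s share an element), so that $\Vert\sum_j\beta_j\theta_{\frac{N}{2}-1,j}\Vert_\varepsilon^2\le C\sum_j|\beta_j|^2\Vert\theta_{\frac{N}{2}-1,j}\Vert_\varepsilon^2\le CN^{-4}\cdot N\cdot N^{-1}=CN^{-4}$. The paper's own display \eqref{T3} is written loosely on exactly this point, so you are in good company, but your write-up should make the finite-overlap step explicit.
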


\section{Numerical results}\label{sec. 5}
Here we will do some numerical experiments to support our theoretical results. Calculations are performed by Intel Visual Fortran 11, and we can refer to \cite{Ben1Mic2:2005-Numerical} for the discrete problems.

Consider the following test problem of \eqref{model problem}:
\begin{equation*}
	\begin{aligned}
		-\varepsilon\Delta u-(3-x-y)u_{x}+2u=&f(x,y)\quad&&\text{in $\Omega=(0,1)^{2}$},\\
		u=&0\quad &&\text{on $\partial\Omega$}.
	\end{aligned}
\end{equation*}
We choose $f(x,y)$ such that 
\begin{equation*}
	u(x,y)=(\cos\frac{\pi x}{2}-\frac{e^{-\frac{x}{\varepsilon}}-e^{-\frac{1}{\varepsilon}}}{1-e^{-\frac{1}{\varepsilon}}})(\frac{(1-e^{-\frac{y}{\sqrt\varepsilon}})(1-e^{-\frac{1-y}{\sqrt\varepsilon}})}{1-e^{-\frac{y}{\sqrt\varepsilon}}}).
\end{equation*}
is the exact solution to \eqref{model problem}.

In our investigation, we use the bilinear FEM and assume that $\varepsilon\le N^{-1}$. Numerical results can be found in Table \ref{table:1}, which lists errors and convergence order under the energy norm $\Vert u^{I}-u^{N}\Vert_{\varepsilon}$ in the case of $\varepsilon=10^{-2},10^{-3},\dots,10^{-8}$ and $N=8,16,32,64,128,,256$.

Table \ref{table:1} indicates that $\Vert u^I-u^N\Vert_\varepsilon$ converges at a rate of almost $\mathcal{O}(N^{-2})$, verifying Theorem \ref{main theorem}.
\begin{table}[http]
	\caption{Errors of $\Vert u^{I}-u^N\Vert_{\varepsilon}$ and convergence order}
	\footnotesize
	\begin{tabular*}{\textwidth}{@{}@{\extracolsep{\fill}} c cccccc @{}}
		\cline{1-7}{}
		\multirow{2}{*}{ $\varepsilon$ }&\multicolumn{6}{c}{$N$ }\\ 
		\cline{2-7}                 &8        &16         &32        &64       &128        &256           \\
		\cline{1-7}
		\multirow{2}{*}{ $10^{-2}$ }      &0.132E-01  &0.167-02    &0.209E-03  &0.264E-04 &0.334E-05    &0.426E-06  \\
		&2.99     &2.99      &2.99     &2.98    &2.97      &---       \\
		\cline{2-7}
		\multirow{2}{*}{ $10^{-3}$ }&0.223E-01  &0.336E-02    &0.386E-03   &0.439E-04 &0.525E-05   &0.647E-06  \\
		&2.73   &3.12       &3.14      &3.06    &3.02   &---       \\
		\cline{2-7}
		\multirow{2}{*}{ $10^{-4}$ }&0.281E-01  &0.295E-02    &0.353E-03   &0.498E-04 &0.801E-05   &0.117E-05 \\
		&3.25    &3.07       &2.82      &2.64    &2.77   &---       \\
		\cline{2-7}
		\multirow{2}{*}{ $10^{-5}$ }&0.235E-01  &0.266E-02    &0.339E-03   &0.508E-04 &0.941E-05   &0.199E-05  \\
		&3.14    &2.98       &2.74    &2.43    &2.24   &---      \\    
		\cline{2-7}
		\multirow{2}{*}{ $10^{-6}$ }&0.208E-01  &0.249E-02    &0.329E-03   &0.504E-04 &0.952E-05   &0.212E-05  \\
		&3.06    &2.92     &2.71     &2.40     &2.17     &---      \\ 
		\cline{2-7}
		\multirow{2}{*}{ $10^{-7}$}&0.195E-01  &0.241E-02    &0.324E-03   &0.501E-04 &0.952E-05   &0.213E-05 \\
		&3.02    &2.90      &2.69     &2.40     &2.16     &---    \\
		\cline{2-7}
		\multirow{2}{*}{ $10^{-8}$}&0.191E-01  &0.239E-02    &0.323E-03   &0.501E-04 &0.953E-05   &0.213E-05 \\
			&3.00    &2.89      &2.69     &2.39     &2.16     &---      \\ 
		\cline{1-7}{}
	\end{tabular*}
	\label{table:1}
\end{table}
\section{Conflict of interest statement}
We declare that we have no conflict of interest.
\section{Bibliography}
\bibliographystyle{plain}

\end{document}